\documentclass[12pt]{amsart}

\usepackage[T1]{fontenc}
\usepackage{lmodern}
\usepackage[a4paper,margin=1.6cm]{geometry}
\usepackage{amsmath,amssymb,mathtools,amsthm}
\usepackage{microtype}
\usepackage[hidelinks]{hyperref}
\hypersetup{pdftitle={The Fractional-Hessian Sobolev Inequality},
	pdfsubject={Measurable coefficients, Sobolev Hessians, and the measure endpoint}}

\newcommand{\tr}{\operatorname{tr}}
\newcommand{\diam}{\operatorname{diam}}
\newcommand{\Sym}{\operatorname{Sym}}
\newcommand{\sym}{\operatorname{sym}}
\newcommand{\gap}{\operatorname{gap}}
\newcommand{\inc}{\operatorname{inc}}
\newcommand{\R}{\mathbb R}
\newcommand{\D}{\mathbb D}

\newcommand{\op}{\mathrm{op}}
\newcommand{\per}{\mathrm{per}}
\newcommand{\loc}{\mathrm{loc}}
\newcommand{\dx}{\,dx}
\DeclareMathOperator{\cof}{cof}
\newtheorem{theorem}{Theorem}
\newtheorem{lemma}[theorem]{Lemma}
\newtheorem{proposition}[theorem]{Proposition}
\theoremstyle{remark}
\newtheorem*{remark}{Remark}
\numberwithin{equation}{section}
\allowdisplaybreaks[1]
\title[Fractional elementary symmetric functions and $q$-Hessian estimates]{Fractional elementary symmetric functions and \\ Hessian estimates in the plane}
\author{Gautam Neelakantan Memana}
\address{Mathematical Sciences Institute, Australian National University, Canberra, ACT 2601}
\email{neelanmemana@gmail.com}
\author{Neil S. Trudinger}
\address{Mathematical Sciences Institute, Australian National University, Canberra, ACT 2601}
\email{Neil.Trudinger@anu.edu.au}
\author{Po-Lam Yung}
\address{Mathematical Sciences Institute, Australian National University, Canberra, ACT 2601 \& CNRS-ANU International Research Laboratory FAMSI (France Australia Mathematical Sciences and Interactions) \& Department of Mathematics, The Chinese University of Hong Kong}
\email{PoLam.Yung@anu.edu.au, plyung@math.cuhk.edu.hk}
\date{12 October 2026}

\begin{document}
	\begin{abstract}
		We prove an endpoint Pucci maximum principle for domains in $\R^2$,
		formulated as in \cite{Trudinger}, but using
		a generalization of elementary symmetric polynomials to noninteger orders.  
		As a result, we sharpen an earlier result of Astala, Iwaniec and Martin \cite{AIM}, and 
		deduce a new Sobolev inequality for $q$-Hessians where $1 < q < 2$.
		The fractional elementary symmetric functions we use are derived through fractional differentiation. Their signed singular value extension coincides with the Burkholder functional introduced by Burkholder \cite{Burkholder1984} and gives us access to a local Burkholder inequality due to Astala, Iwaniec, Prause and Saksman \cite{AIPS}, leading to a different proof of the Pucci maximum principle from the one in \cite{AIM} that used quasiconformal maps.
		We include an elementary real-variable proof of this local Burkholder inequality, obtained by combining a few known techniques from \cite{AA,Burkholder,Sverak,BMS,Guerra}. 
	\end{abstract}
	\maketitle
	
	\section{Introduction}
	
	In this paper, we introduce a generalization of the Laplacian and the Monge--Amp\`ere operator in $\R^2$, and use it to prove an endpoint Pucci maximum principle, sharpening an earlier result of Astala--Iwaniec--Martin \cite{AIM}.
	
	The Laplacian and the Monge--Amp\`ere operator of a smooth function $u$ in $\R^2$ are defined by taking the trace and the determinant of the Hessian matrix $D^2 u$. The trace and the determinant are elementary symmetric polynomials of the eigenvalues of $D^2 u$, and our first contribution is in identifying a generalization of elementary symmetric polynomials to elementary symmetric functions of non-integer orders. 
	
	Let $\Sym(2)$ denote the real symmetric $2\times2$ matrices, with inner
	product $\tr(AH)=\sum_{i,j}A_{ij}H_{ij}$. For $q>0$ and
	$H\in\Sym(2)$ with $\lambda_{\max}(H)>0$, we define
	\begin{equation}\label{eq:density}
		S_q(H)=\lambda_{\max}(H)^{q-1}
		\left(\frac q2\lambda_{\min}(H)
		+\left(1-\frac q2\right)\lambda_{\max}(H)\right),
	\end{equation}
	and set $S_q(0)=0$; henceforth $\lambda_{\max}$ and $\lambda_{\min}$ represent the largest and smallest eigenvalue of a real symmetric matrix. For $p\geq2$ and $P\in\R^{2\times2}$, we use the
	same notation for its Burkholder extension
	\[
	S_p(P)=M^{p-1}\left(\frac p2m+\left(1-\frac p2\right)M\right),
	\]
	where $M\geq|m|$ are the signed singular values of $P$, determined by
	$M=\|P\|_{\op}$ and $Mm=\det P$. The two definitions agree whenever
	$P$ is symmetric and
	$\lambda_{\max}(P)\geq|\lambda_{\min}(P)|$, in particular for
	$P\in\Gamma_p$ for $p \geq 1$, where 
	\[
	\Gamma_q :=\left\{H\in\Sym(2):
	\frac q2\lambda_{\min}(H)+\left(1-\frac q2\right)
	\lambda_{\max}(H)\geq0\right\} \qquad \text{for $q \geq 0$}
	\] 
	is our generalization of the Garding cone to non-integer orders.
	We have $S_1(H)=\tr(H)/2$, $S_2(P)=\det P$, and
	$S_q(I)=1$. We set $S_0(H):=1$ for $H\in\Sym(2)$.
	$S_q(D^2 u)$ is then our fractional degree generalization of the Laplacian and the Monge--Amp\`ere operator: for $1 < q \leq 2$, $S_q^{1/q}(D^2 u)$ is elliptic and concave when $D^2 u \in \Gamma_q$.
	See Section~\ref{sec:elliptic} below.
	
	The functional $S_p$, $p \geq 1$, has already appeared in the work of Burkholder \cite{Burkholder1984} as an auxiliary device; see \cite[p.661, (5.33)]{Burkholder1984} and also \cite{AFGKK}. We arrived at $S_q$, $q > 0$, via a somewhat different route, via fractional differentiation; see Section~\ref{sec:fractionaldiff}. The fractional differentiation point of view turned out to connect with another useful integral representation of $S_p$ in Baernstein–Montgomery-Smith \cite[Section 1, equation (1.2b)]{BMS}; see also Guerra \cite[proof of Corollary 1.3]{Guerra}. This connection plays a key role in a real-variable proof of a local Burkholder principle; see Section~\ref{sec:burkholder}. Our endpoint Pucci maximum principle ultimately relies on this local Burkholder principle.

	To formulate our endpoint Pucci maximum principle, we need instead the dual cone to $\Gamma_q$, given for $q>1$ by
	\[
	\Gamma_q^*=\left\{A\in\Sym(2):
	\frac q2\lambda_{\min}(A)-\left(1-\frac q2\right)\lambda_{\max}(A)\geq0\right\},
	\]
	and the dual functional to $S_q$, given in the same range of $q$ by
	\[
	\rho_q^*(A)=\frac{\lambda_{\max}(A)^{1/q}
		\left(\frac q2\lambda_{\min}(A)-\left(1-\frac q2\right)\lambda_{\max}(A)\right)^{1-1/q}}
	{(q-1)^{1-1/q}}, \qquad A \in \Gamma_q^*.
	\]
	
	The Pucci maximum principle established by Astala--Iwaniec--Martin \cite{AIM} can be stated as follows. Let $1<q_0<q\leq2$ and let
	$\Omega\subset\R^2$ be a nonempty bounded open set. If $A$ is measurable,
	$A(x)\in\Gamma_{q_0}^*\setminus\{0\}$ a.e., and
	$u\in W^{2,q}(\Omega)\cap C(\overline\Omega)$ satisfies
	$\tr(AD^2u)=f$, then \cite[Theorem~3.5]{AIM} implies that
	\begin{equation}\label{eq:AIM-comparison}
		\|u\|_{L^\infty(\Omega)}
		\leq \max_{\partial\Omega}|u|
		+C_{q,q_0}\,\diam(\Omega)^{2-2/q}
		\left\|\frac{f}{\tr (A)}\right\|_{L^q(\Omega)}.
	\end{equation}
	For $A \in \Gamma_{q_0}^*$ and $1 < q_0 < q \leq 2$, one can check that
	\[
	\rho_q^*(A)\geq c_{q,q_0}\operatorname{tr}(A),\qquad c_{q,q_0} > 0;
	\]
	see \eqref{eq:rho-trace-comparison}.
	In light of this, our result below can be seen as the endpoint case when $q = q_0$.
	
	\begin{theorem}[Endpoint Pucci maximum principle]\label{thm:weighted-maximum}
		Let $1<q\leq2$ and let $\Omega\subset\R^2$ be a nonempty bounded open
		set. Suppose $A:\Omega\to\Sym(2)$ is measurable and
		$A(x)\in\operatorname{int}\Gamma_q^*$ a.e. If
		$u\in W^{2,q}_{\loc}(\Omega)\cap C(\overline\Omega)$ satisfies
		$\tr(AD^2u)=f \text{  a.e. in }\Omega$,
		then
		\begin{equation}\label{eq:weighted-maximum}
			\sup_\Omega u
			\leq \sup_{\partial\Omega}u
			+C_q\,\diam(\Omega)^{2-2/q}
			\left\|\frac{f^-}{\rho_q^*(A)}\right\|_{L^q(\Omega)},
			\qquad f^-:=\max\{-f,0\}.
		\end{equation}
	\end{theorem}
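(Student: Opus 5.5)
We follow the Aleksandrov--Bakelman--Pucci scheme of \cite{Trudinger}, replacing the determinant (the case $q=2$) by the fractional functional $S_q$ and the arithmetic--geometric mean inequality by the duality between $S_q$ and $\rho_q^*$.

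\textbf{Step 1: reduction.} After approximating $\Omega$ from inside by smooth subdomains and mollifying if necessary, we may assume $u\in W^{2,q}(\Omega)$. Set $M_0=\sup_\Omega u-\sup_{\partial\Omega}u>0$ and $d=\diam(\Omega)$.

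\textbf{Step 2: upper contact set.} Let $\Gamma^+$ be the upper contact set of $u$ with respect to cones of slope at most $M_0/d$. In the classical argument the gradient image $Du(\Gamma^+)$ contains the ball of radius $M_0/d$, and the area formula gives a lower bound for $\int_{\Gamma^+}|\det D^2u|$. Here we instead want a lower bound for
\[
\int_{\Omega'}S_q(-D^2u)\,dx \gtrsim (M_0/d)^q
\]
over a suitable set $\Omega'$. This is where the local Burkholder inequality of Section~\ref{sec:burkholder} enters. Applied to the gradient map $Du$, which has symmetric differential, it states roughly that $\int S_p(DF)\geq 0$ (or $\leq$, with the appropriate sign) for maps $F$ that are affine or of controlled form on the boundary. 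We would take $F=Du-\ell$ for a truncated or concave-envelope modification of $u$ that agrees with a cone of slope $M_0/d$ near $\partial\Omega$. On the region where $-D^2u$ is not in $\Gamma_q$, the function $S_q(-D^2u)$ is negative. Burkholder's inequality then shows that the positive part dominates the contribution of the cone, which is of order $(M_0/d)^q\,d^2$. This step is the main obstacle. The difficulty is choosing the comparison map so that the boundary term is exactly computable. First I would try the concave envelope of $u^+$ extended to a ball of radius $2d$, together with a $W^{1,q}$ version of Burkholder's local inequality, which is the endpoint where $p=q\le 2$ is understood via the signed singular value extension.

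\textbf{Step 3: duality.} On the set where $-D^2u\in\Gamma_q$ we use the pointwise duality inequality
\[
\rho_q^*(A)\,S_q(H)^{1/q}\leq \tr(AH),\qquad A\in\Gamma_q^*,\ H\in\Gamma_q,
\]
which is the defining property of $\rho_q^*$ as dual to the concave functional $S_q^{1/q}$ (Section~\ref{sec:elliptic}). With $H=-D^2u$ this gives
\[
S_q(-D^2u)^{1/q}\leq \frac{-\tr(AD^2u)}{\rho_q^*(A)}=\frac{-f}{\rho_q^*(A)}\leq \frac{f^-}{\rho_q^*(A)}.
\]

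\textbf{Step 4: conclusion.} Raising to the power $q$, integrating, and combining with Step 2 gives
\[
(M_0/d)^q\,d^2\lesssim \left\|\frac{f^-}{\rho_q^*(A)}\right\|_{L^q}^q.
\]
Solving for $M_0$ yields $M_0\leq C_q\,d^{2-2/q}\|f^-/\rho_q^*(A)\|_{L^q}$, which is \eqref{eq:weighted-maximum}. The hypothesis $A\in\operatorname{int}\Gamma_q^*$ ensures that $\rho_q^*(A)>0$, so the quotient is well defined. Finally, we pass to the limit in the approximations from Step 1.
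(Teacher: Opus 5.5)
There is a genuine gap. Everything rests on Step~2, which you leave unproved, and Step~2 is not an intermediate lemma: it is the theorem restated. Steps~3--4 are a one-line pointwise reduction. Conversely, Step~2 follows from Theorem~\ref{thm:weighted-maximum} itself. Take $A=D\mathcal F_q(-D^2u+\varepsilon I)$ on $\{-D^2u\in\Gamma_q\}$; there $\rho_q^*(A)=1/2$ and $f^-\leq\mathcal F_q(-D^2u+\varepsilon I)$, as in \eqref{eq:closed-cone-calibration}. Elsewhere take any matrix of $\operatorname{int}\Gamma_q^*$ with $\tr(AD^2u)>0$, which exists since $\Gamma_q=(\Gamma_q^*)^*$. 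This yields exactly $\int_{\{-D^2u\in\Gamma_q\}}S_q(-D^2u)\,dx\gtrsim M_0^qd^{2-2q}$, so the difficulty has only been relocated.

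The tool you propose for Step~2 does not apply. Theorem~\ref{thm:local-burkholder} holds only for $p\geq2$ and is an upper bound, $\int_\D S_p(Df)\,dx\leq|\D|S_p(A)$. You need a lower bound at exponent $q<2$, and nothing in the paper or your sketch supplies one; you also leave the sign open.

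The contact-set mechanism itself fails for $q<2$. The cone $u=k(R-|x|)$ lies in $W^{2,q}(B_R)\cap C(\overline{B_R})$, but only its vertex belongs to $\Gamma^+$: at every other point the unique supporting plane has slope $k>M_0/d=k/2$. The same holds for the concave envelope over a larger ball, so $\int_{\Gamma^+}S_q(-D^2u)\,dx=0$. Since $Du\in W^{1,q}$ with $q<2$ lacks the Lusin property, $B_{M_0/d}\subset Du(\Gamma^+)$ gives no measure information.

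Your scaling is also off. The bound $(M_0/d)^qd^2$ is not dilation invariant, and solving it gives $d^{1-2/q}$, not $d^{2-2/q}$. The correct homogeneity is $M_0^qd^{2-2q}$, which matches $\int_{B_R}S_q(-D^2u)\,dx=\pi M_0^qR^{2-2q}$ for the cone.

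The missing idea is to dualize before invoking Burkholder, which changes both the exponent and the direction of the needed inequality. The paper first takes smooth $A$, extended to $B_D(x_0)$. It compares $u$ with the solution of $-\tr(AD^2v)=f^-\mathbf 1_\Omega$ vanishing on $\partial B_D(x_0)$ and writes $v(x_0)=\int G(x_0,\cdot)f^-$ with the adjoint Green function. H\"older then reduces everything to $\int_{B_D(x_0)}(G\rho_q^*(A))^{q'}\,dx\leq C_qD^2$.

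The field $B=GA$ satisfies $\operatorname{div}\operatorname{div}B=-\delta_{x_0}$, vanishes on the circle, and takes values in $\Gamma_q^*=\Gamma_{q'}$, with $\rho_q^*(B)^{q'}=S_{q'}(B)$. So what is needed is an \emph{upper} bound for $S_p$ with $p=q'\geq2$, which is exactly what Theorem~\ref{thm:local-burkholder} provides. Proposition~\ref{prop:dual-burkholder} makes it applicable in three moves:
\begin{itemize}
\item it writes $\cof B+E_0=\sym DW$, where the fixed strain $E_0$ cancels the point incompatibility;
\item it proves $S_p(B)\leq S_p(D(ax+W))$;
\item it uses the boundary control of Lemma~\ref{lem:boundary} and a radial cutoff to make the boundary data linear.
\end{itemize}
Measurable $A$ and $W^{2,q}_{\loc}$ solutions are then handled by $A_\varepsilon=(1-\varepsilon)A+\varepsilon I/2$ together with \eqref{eq:rho-regularization}, and by exhaustion with superlevel sets.
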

	
	At $q=2$, $\Gamma_2=\Gamma_2^*=\{H\geq0\}$ and
	$\rho_2^*(A)=\sqrt{\det A}$, so Theorem~\ref{thm:weighted-maximum} reduces to the Alexandrov estimate
	\[
	\sup_\Omega u
	\leq \sup_{\partial\Omega}u
	+C \diam(\Omega) \left\|\frac{f^-}{\sqrt{\det A}}\right\|_{L^2(\Omega)}.
	\]
	See also \cite{Trudinger,Trudinger2024} for prior work on Pucci's
	estimates in higher dimensions. Also while our work was in progress, the full resolution of Pucci's conjecture in higher dimensions was  accomplished in two substantially different and independent approaches by Chen, Trudinger and Wang \cite{CTW 2026} and Hung and Wu \cite{Hung-Wu}.
	
	Applying Theorem~\ref{thm:weighted-maximum} with $A_\varepsilon=D(S_q^{1/q})(D^2u+\varepsilon I)$ and using the ellipticity and concavity of $S_q^{1/q}$ gives the following $q$-Hessian estimate. 
	
	\begin{theorem}[A Sobolev estimate for $S_q$]\label{thm:Sobolev}
		Let $1<q\leq2$ and let $\Omega\subset\R^2$ be a nonempty bounded open
		set. Suppose $u\in W^{2,q}_{\loc}(\Omega)\cap C(\overline\Omega)$,
		$u=0$ on $\partial\Omega$, $D^2u\in\Gamma_q$ a.e., and
		$S_q(D^2u)\in L^1(\Omega)$. Then
		\begin{equation}\label{eq:target}
			\|u\|_{L^\infty(\Omega)}
			\leq C_q\,\diam(\Omega)^{2-2/q}
			\left(\int_\Omega S_q(D^2u)\dx\right)^{1/q}.
		\end{equation}
	\end{theorem}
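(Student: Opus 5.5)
We derive Theorem~\ref{thm:Sobolev} from Theorem~\ref{thm:weighted-maximum}, as indicated before the statement. Fix $\varepsilon>0$ and set $H_\varepsilon:=D^2u+\varepsilon I$ and $A_\varepsilon:=D(S_q^{1/q})(H_\varepsilon)$. Since $D^2u\in\Gamma_q$ a.e. and $I\in\operatorname{int}\Gamma_q$, we have $H_\varepsilon\in\operatorname{int}\Gamma_q$ a.e. There $S_q$ is smooth, because $\lambda_{\max}(H_\varepsilon)>0$. In the generic case $\lambda_{\max}>\lambda_{\min}$, $S_q$ is smooth as a function of the eigenvalues. At coincident eigenvalues one still has differentiability: $S_q^{1/q}$ is concave and the directional derivatives match.

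\textbf{Step 1: $A_\varepsilon$ lies in $\operatorname{int}\Gamma_q^*$.} Diagonalize $H_\varepsilon$ and write $S_q=\lambda_1^{q-1}(\tfrac q2\lambda_2+(1-\tfrac q2)\lambda_1)$ with $\lambda_1=\lambda_{\max}$. Then
\[
\partial_{\lambda_2}S_q=\tfrac q2\lambda_1^{q-1},\qquad
\partial_{\lambda_1}S_q=\lambda_1^{q-2}\bigl((q-1)\tfrac q2\lambda_2+q(1-\tfrac q2)\lambda_1\bigr).
\]
The eigenvalues of $A_\varepsilon$ are these quantities multiplied by $\tfrac1q S_q^{1/q-1}$. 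A direct check shows
\[
\tfrac q2\lambda_{\min}(A_\varepsilon)-(1-\tfrac q2)\lambda_{\max}(A_\varepsilon)>0
\]
whenever $H_\varepsilon\in\operatorname{int}\Gamma_q$; this is the ellipticity statement of Section~\ref{sec:elliptic}.

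\textbf{Step 2: the key identity $\rho_q^*(A_\varepsilon)=\text{const}$.} By Euler's relation for the $1$-homogeneous function $S_q^{1/q}$, we have $\tr(A_\varepsilon H_\varepsilon)=S_q^{1/q}(H_\varepsilon)$. We expect that $\rho_q^*$ is exactly the dual (polar) functional, namely
\[
\rho_q^*(A)=\inf\{\tr(AH): H\in\Gamma_q,\ S_q(H)\geq1\}.
\]
Concavity of $S_q^{1/q}$ then gives $\rho_q^*(D S_q^{1/q}(H))=c_q$, an absolute constant, probably $1$. I would confirm this by direct computation from the eigenvalue formulas in Step~1. This computation is the main obstacle: one must verify the explicit formula for $\rho_q^*$ rather than merely an inequality. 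It suffices, however, to prove $\rho_q^*(A_\varepsilon)\geq c_q>0$, which is the direction we need.

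\textbf{Step 3: bounding $f$.} Concavity and $1$-homogeneity give
\[
\tr(A_\varepsilon D^2u)=\tr(A_\varepsilon H_\varepsilon)-\varepsilon\tr A_\varepsilon=S_q^{1/q}(H_\varepsilon)-\varepsilon\tr A_\varepsilon .
\]
We apply Theorem~\ref{thm:weighted-maximum} to $-u$, with $f=-\tr(A_\varepsilon D^2u)$. Then
\[
f^-=\tr(A_\varepsilon D^2 u)^+\leq S_q^{1/q}(H_\varepsilon).
\]
Since $-u=0$ on $\partial\Omega$, this yields
\[
\sup(-u)\leq C_q\diam(\Omega)^{2-2/q}c_q^{-1}\Bigl(\int_\Omega S_q(D^2u+\varepsilon I)\Bigr)^{1/q}.
\]
By the subharmonicity that follows from $D^2u\in\Gamma_q\subset\Gamma_1$ ($\Delta u\geq0$) and $u=0$ on $\partial\Omega$, we get $u\leq0$, so $\|u\|_\infty=\sup(-u)$.

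\textbf{Step 4: letting $\varepsilon\to0$.} It remains to pass to the limit. For bounded $\Omega$ we use the pointwise bound
\[
S_q(H+\varepsilon I)\leq C\bigl(S_q(H)+\varepsilon\lambda_{\max}(H)^{q-1}+\varepsilon^q\bigr).
\]
On $\Gamma_q$ we have $\lambda_{\max}^q\lesssim S_q$ when $q<2$. The case $q=2$ needs separate care, but there $\det(H+\varepsilon I)=\det H+\varepsilon\tr H+\varepsilon^2$ and $\tr H\in L^1_{\loc}$. If $\tr H$ is not globally integrable, I would first apply the estimate on subdomains $\Omega'\Subset\Omega$ where $u\in W^{2,q}$, using $\sup_{\partial\Omega'}(-u)\to0$, and then let $\Omega'\uparrow\Omega$. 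Dominated convergence then gives the conclusion~\eqref{eq:target}.
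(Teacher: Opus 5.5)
Your strategy is the paper's: linearize with $A_\varepsilon=D\mathcal F_q(D^2u+\varepsilon I)$, use Euler's identity and concavity, and apply Theorem~\ref{thm:weighted-maximum}. Steps 1--3 are sound, but Step 4 fails for $1<q<2$.

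\textbf{The gap in Step 4.} The claim $\lambda_{\max}(H)^q\lesssim S_q(H)$ on $\Gamma_q$ is false. It does hold on the positive semidefinite cone $\Gamma_2$, where $\frac q2\lambda_{\min}+(1-\frac q2)\lambda_{\max}\geq(1-\frac q2)\lambda_{\max}$. For $q<2$, however, $\Gamma_q$ is strictly larger, and $S_q$ vanishes on $\partial\Gamma_q$ while $\lambda_{\max}$ does not. The paper's example $u=|x|^{2-2/q}-1$ shows this: $D^2u\in\partial\Gamma_q$, $S_q(D^2u)=0$, and $\lambda_{\max}(D^2u)\neq0$. Since $u$ is only in $W^{2,q}_{\loc}(\Omega)$, nothing in the hypotheses controls $\int_\Omega\lambda_{\max}(D^2u)^{q-1}\dx$ over all of $\Omega$. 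Applying Theorem~\ref{thm:weighted-maximum} directly on $\Omega$ may therefore give $\int_\Omega S_q(D^2u+\varepsilon I)\dx=\infty$ for every $\varepsilon>0$. The estimate is then vacuous, and dominated convergence on $\Omega$ is not justified.

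\textbf{The fix.} Use the localization you reserved for $q=2$; it works for every $q$, and it is exactly the paper's argument. Apply Theorem~\ref{thm:weighted-maximum} on $V\Subset\Omega$, where $D^2u\in L^q(V)$. (The paper uses the two-sided form; your subharmonicity argument for $u\leq0$ is also fine.) On $V$, your bound $S_q(H+\varepsilon I)\leq C(S_q(H)+\varepsilon\lambda_{\max}(H)^{q-1}+\varepsilon^q)$, which is correct, gives an integrable majorant. So does the simpler bound $S_q(H+\varepsilon I)\leq(\lambda_{\max}(H)+\varepsilon)^q$, which follows from $\mathcal F_q\leq\lambda_{\max}$ on $\Gamma_q$. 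This yields the estimate on $V$ with $\max_{\partial V}|u|$ added. Then take $V=\{|u|>\eta\}$, which is compactly contained in $\Omega$ because $u\in C(\overline\Omega)$ vanishes on $\partial\Omega$, and let $\eta\downarrow0$.

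\textbf{Step 2 is not an obstacle.} Concavity plus Euler's identity give the tangent-plane inequality $\mathcal F_q(K)\leq\tr(A_\varepsilon K)$ for all $K\in\Gamma_q$. This shows at once that $A_\varepsilon\in\Gamma_q^*$. It also shows every quotient in \eqref{eq:rho-definition} is at least $1/2$, with equality at $K=D^2u+\varepsilon I$. Hence $\rho_q^*(A_\varepsilon)=1/2$ exactly (not $1$, because of the factor $2$ in the definition). This also gives $A_\varepsilon\in\operatorname{int}\Gamma_q^*$, since $\rho_q^*$ vanishes on $\partial\Gamma_q^*$.

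Attributing Step 1 to ellipticity is inaccurate. Ellipticity only gives $A_\varepsilon\geq0$, which is weaker than $A_\varepsilon\in\Gamma_q^*=\Gamma_{q'}$. Your direct eigenvalue check is nonetheless correct.
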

	
	At $q=2$, there is also a measure formulation: if $u\in C(\overline\Omega)$ is locally convex in $\Omega$, vanishes on $\partial\Omega$, and has finite Monge--Amp\`ere mass, then
	\begin{equation}\label{eq:MA-target}
		\|u\|_{L^\infty(\Omega)} \leq C\,\diam(\Omega)\,\mu_2[u](\Omega)^{1/2}.
	\end{equation}
	This is the case $n=k=2$ of \cite{TW2}.
	Here locally convex means convex on every ball contained in $\Omega$,
	and $\mu_2[u]$ is the Monge--Amp\`ere measure. It equals
	$\det D^2u\,dx$ for smooth $u$ and is weakly continuous under locally
	uniform convergence of locally convex functions
	\cite[Theorem~1.1]{TW2}; see also \cite[Section~8]{WangHessian}.
	Requiring only $u\in C(\overline\Omega)$ and local convexity is much
	weaker than the hypothesis $u\in W^{2,2}_{\loc}(\Omega)$ in
	Theorem~\ref{thm:Sobolev}: a locally convex function may have singular
	Monge--Amp\`ere mass and need not belong to $W^{2,2}_{\loc}(\Omega)$.
	A similar issue arises when $1 < q < 2$, as exemplified by the following example: if $u(x)= |x|^{2-2/q}-1$, then it is continuous throughout $\D$, vanishes on the boundary, $D^2 u(x) \in \Gamma_q$ with $S_q(D^2 u)(x) = 0$ for all $x \ne 0$. Nonetheless of course $\|u\|_{L^{\infty}(\D)} \ne 0$. This does not contradict Theorem~\ref{thm:Sobolev} because this $u$ fails to be $W^{2,q}$ near $0$. When $q = 2$, this function $u(x)=|x|-1$ is captured by the measure formulation of \cite{TW2}: $S_q(D^2 u)$ is then the measure $\pi \delta_0$. It is thus tempting to try to formulate a measure version of our Theorem~\ref{thm:Sobolev} as well when $1 < q < 2$. Unfortunately this measure construction does not
	extend to $S_q$ for $1<q<2$; see Section~\ref{sec:measures} for a simple example. 
	
	One distinctive feature for our $S_q(D^2 u)$ when $q$ is not an integer is that it lacks the divergence structure enjoyed by the elementary symmetric polynomials $S_k$ when $k$ is a positive integer. To get around this, our proof of Theorem~\ref{thm:weighted-maximum} relies on the following local Burkholder principle on the unit disk $\D$.
	\begin{theorem}[Local Burkholder principle on the disk, \cite{AFGKK}]\label{thm:local-burkholder}
		Let $2\leq p<\infty$ and let $A\in\R^{2\times2}$. If
		$f-Ax\in W^{1,2}_0(\D;\R^2)$ and $S_p(Df)\geq0$ a.e. in $\D$, then
		$S_p(Df)\in L^1(\D)$ and
		\[\frac1{|\D|}\int_\D S_p(Df)\dx\leq S_p(A).\ \]
		For $p=2$, equality holds.
	\end{theorem}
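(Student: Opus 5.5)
We plan a real-variable proof resting on three ingredients: the Baernstein--Montgomery-Smith/Guerra integral representation of $S_p$, a rank-one convexity argument, and the null-Lagrangian property of the determinant.

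\textbf{Step 1: the case $p=2$.} Here $S_2(P)=\det P$ is a null Lagrangian. For $f-Ax\in W^{1,2}_0$ we have $\int_\D \det Df\,dx=|\D|\det A$; this follows by approximating $f-Ax$ in $W^{1,2}$ by $C_c^\infty$ maps and using the divergence form of the determinant. The sign hypothesis is not needed in this case, and equality holds.

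\textbf{Step 2: the representation of $S_p$ for $p>2$.} We want to write
\[
S_p(P)=\int_0^\infty \Phi_t(P)\,d\mu(t)
\]
with $\mu\geq0$, where each $\Phi_t$ is a rank-one convex functional built from $\det P$ and $|P|$ (or $M$). This is the decomposition of \cite[(1.2b)]{BMS}, which the paper recovers through fractional differentiation. Concretely, writing $S_p(P)=M^{p-1}(\frac p2 m+(1-\frac p2)M)$ and $M=\tfrac12(|\partial f|+|\bar\partial f|)$-type variables, one expresses $S_p$ as a superposition of the functionals
\[
B_t(P)=\bigl(|\partial|^2-|\bar\partial|^2\bigr)\mathbf 1_{\{M\le t\}}\text{-like pieces plus } (t-M)^+\text{ type terms}.
\]
Each piece is of the form ``$\det P$ truncated at level $t$''. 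The point of this step is to reduce the problem to truncated determinant functionals.

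\textbf{Step 3: quasiconvexity of the truncated pieces.} Each truncated piece should satisfy the Jensen inequality
\[
\frac1{|\D|}\int_\D \Phi_t(Df)\le \Phi_t(A)
\]
(or $\ge$, depending on sign conventions) for maps with affine boundary data. We plan to prove this by combining two tools:
\begin{itemize}
\item the Šverák/Astala--Faraco argument \cite{Sverak,AA}: on the set $\{M(Df)>t\}$ one uses the null Lagrangian $\det$;
\item a level-set/coarea argument on $\{|f|\ \text{or}\ M>t\}$, which controls the remaining part.
\end{itemize}
Integrating over $t$ against $\mu$ then yields the claimed inequality for smooth $f$.

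\textbf{Step 4: the sign hypothesis and general $f$.} The hypothesis $S_p(Df)\ge0$ is used to justify Fubini/monotone convergence when passing from the $t$-pieces to $S_p$. This gives integrability of $S_p(Df)$, since a priori only $Df\in L^2$ is known while $S_p$ grows like $M^p$. Positivity lets us truncate: apply the inequality to the pieces with $t\le T$, note that these are bounded by $C\,T^{p-2}|Df|^2$ and are therefore integrable, and let $T\to\infty$ by monotone convergence. Finally, approximating $f$ by smooth maps with the same boundary data, the bounded truncated pieces pass to the limit via $L^2$ convergence of gradients.

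\textbf{Main obstacle.} The hard step is Step 3 at the Sobolev regularity $W^{1,2}$, where $p>2$. Proving Jensen for the truncated pieces requires a careful rank-one convexity plus null-Lagrangian argument, and exactly this step is the content of the local Burkholder inequality of \cite{AIPS,AFGKK}.
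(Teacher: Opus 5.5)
Your outline has the same architecture as the paper: a null Lagrangian for $p=2$, a positive superposition of truncated determinants, a Jensen inequality for each truncation, and integration in $t$. But Step~3, which carries all the content, is not proved, and the mechanism you sketch for it would not work.

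Made precise, the representation is $S_p(P)=\tfrac12p(p-1)(p-2)\int_0^a t^{p-3}(M-t)_+(m-t)\,dt$ for $a\ge M(P)$. Here $(M-t)_+(m-t)=\det P-L_t(P)$, and $L_t$ is the rescaled Burkholder--\v{S}ver\'ak function: $L_t=\det$ on $\{M\le t\}$ and $L_t=t(M+m)-t^2$ on $\{M>t\}$. By the null Lagrangian, what you need is the quasiconvexity inequality $\int_\D L_t(Df)\,dx\ge|\D|L_t(A)$. This also settles your ``or $\ge$'': $L_t$ is the rank-one convex piece, and $\det-L_t$ satisfies the reverse inequality.

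Using $\det$ as a null Lagrangian ``on the set $\{M(Df)>t\}$'' cannot give this. The integral $\int_E\det Df\,dx$ over a set $E$ cut out by $Df$ itself is not determined by the boundary data, and that is precisely the difficulty. Nor is this a regularity issue. The inequality is the whole point even for $C_c^\infty$ perturbations, and the passage to $W^{1,2}_0$ follows in one line from $|L_t(P)-L_t(\widetilde P)|\le\sqrt2\,t|P-\widetilde P|$. Deferring this step to \cite{AIPS,AFGKK} is circular, since that is the theorem being proved.

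The missing idea is to split according to $A$ rather than $Df$, and in the hard case to modify one component of $f$ globally.

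If $t<M(A)$, then $L_t(A)$ lies on the convex branch. The global bound $L_t(P)\ge t|\mathbf a(P)+\mathbf b(P)|-t^2$ (using $M+m=|\mathbf a+\mathbf b|$), together with Jensen's inequality for the Euclidean norm, then suffices.

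If $t\ge M(A)$ (so $|\mathbf a(A)|\le t$), replace the first component $u$ of $f$ by the gradient-constrained projection $w$ of Lemma~\ref{lem:projection} on the periodic square. This $w$ satisfies $|\nabla w|\le t$, $\nabla w=\nabla u$ or $|\nabla w|=t$, and $\int_Q\nabla w\cdot(\nabla u-\nabla w)\ge0$. The selection inequality $L_t(P)\ge z\cdot\mathbf b(P)+z\cdot(\mathbf a(P)-z)$ with $z=\nabla w$ gives $L_t(Df)\ge\det D(w,v)+\nabla w\cdot(\nabla u-\nabla w)$. The null Lagrangian is now applied to the full competitor $(w,v)$ on the full square, yielding $|Q|\det A=|Q|L_t(A)$. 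Zero extension then transfers this to $\D$.

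A smaller point in Step~4: the partial integrals $H_{p,a}(Df)$ are not monotone in $a$, because the $t$-integrand changes sign at $t=m$. So monotone convergence does not apply. Use Fatou's lemma instead, together with the fact that $S_p(P)\ge0$ forces $H_{p,a}(P)\ge0$ for every $a>0$.
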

	We also note that by rescaling, Theorem~\ref{thm:local-burkholder}
	holds with $\D$ replaced by $r\D$ for every $r>0$.
	
	The case $A=I$ (the identity matrix) was proved by Astala--Iwaniec--Prause--Saksman \cite{AIPS} using the Beltrami equation and complex interpolation. The case of general $A$ is due to Astala--Faraco--Guerra--Koski--Kristensen \cite{AFGKK}, using principal quasiconformal maps and Young measures. We will only need the case $A=I$, but we state the theorem for general $A$ in order to extract from \cite{AA} a real-variable proof of the full result. The argument in \cite{AA} establishes the corresponding inequality when the perturbation $f-Ax$ belongs to \(C^\infty(\overline\D;\R^2)\) and vanishes on \(\partial \D\). This direct, real-variable proof, presented in Section~\ref{sec:burkholder} below, may have a better chance of being extended to higher dimensions. In fact, the assumption $S_p(Df) \geq 0$ is not even needed in \cite[Corollary 2]{AA} when $f$ is smooth.

	
	%
	%
	
	From the case $A=I$ of Theorem~\ref{thm:local-burkholder}, we deduce
	the following estimate, which is dual to the Pucci maximum principle
	in Theorem~\ref{thm:weighted-maximum}. For a symmetric matrix field
	$B$, define
	\[
	\operatorname{div}\operatorname{div}B
	=\partial_{11}B_{11}+2\partial_{12}B_{12}+\partial_{22}B_{22}.
	\]
	Here and below these derivatives are distributional when the field is
	only in $L^2$. Write $\D=B_1(0)$.
	
	\begin{proposition}[Dual form of the Pucci maximum principle]\label{prop:dual-burkholder}
		Let $1<q\leq2$ and $q'=q/(q-1)$. Suppose
		\[
		B\in L^2(\D;\Sym(2))\cap C^\infty(\overline\D\setminus\{0\};\Sym(2)),
		\qquad B=0\quad\text{on }\partial\D,
		\]
		where the smoothness assertion includes smoothness up to the outer
		boundary, away from the pole. Assume
		\[
		\operatorname{div}\operatorname{div}B=-\delta_0,
		\qquad B(x)\in\Gamma_q^*\quad\text{a.e. in }\D.
		\]
		Then
		\begin{equation}\label{eq:dual-burkholder}
			\int_\D\rho_q^*(B)^{q'}\dx\leq C_q.
		\end{equation}
	\end{proposition}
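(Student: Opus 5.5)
The plan is to deduce \eqref{eq:dual-burkholder} from the case $A=I$ of Theorem~\ref{thm:local-burkholder} with $p=q'\geq 2$, by realising $B$ (up to a rotation) as the derivative of a perturbation of the identity map. The key structural observation is two-dimensional. Since $\operatorname{div}\operatorname{div}B=-\delta_0$, the vector field $V=\operatorname{div}B$ (row-wise divergence) satisfies $\operatorname{div}V=-\delta_0$. Hence $V=-\frac{x}{2\pi|x|^2}+\nabla^\perp\psi$ for some scalar $\psi$ that is smooth away from $0$. Subtracting an explicit symmetric particular solution $B_0$ with $\operatorname{div}B_0=-\frac{x}{2\pi|x|^2}$ (homogeneous of degree $-1$, hence in $L^q$ for $q<2$ but only weakly in $L^2$), the remainder $B-B_0$ has rows of the form $\nabla^\perp$ of something up to the $\psi$-term. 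So $B=J\,Dg$ for a map $g:\D\setminus\{0\}\to\R^2$, where $J$ is rotation by $\pi/2$. The symmetry of $B$ imposes one scalar constraint on $Dg$, and $\det B=\det Dg$.

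I would then normalise $g$ using the boundary condition $B=0$ on $\partial\D$. This makes $Dg=0$ on $\partial\D$, so $g$ is constant on $\partial\D$ and can be taken to vanish there. Since $Dg$ has a multivalued or logarithmic singularity at the pole coming from $-\delta_0$, I would excise a small disk $r\D$ and treat it separately. The map I would test with is $f(x)=x+t\,g(x)$ for a parameter $t>0$, so $f-x\in W^{1,2}_0$ as soon as $B\in L^2$. Because the signed singular values of $I+tJ^{-1}B$ are computable from the eigenvalues of the symmetric matrix $B$, and because $B\in\Gamma_q^*$ is precisely the condition making $\frac q2\lambda_{\min}-(1-\frac q2)\lambda_{\max}\geq0$, I would verify the following pointwise algebraic inequality: for a suitable $t$,
\[
S_p(I+tJ^{-1}B)\;\geq\;1+t\,c_1\,\ell(B)+c_2\,t^{p}\rho_q^*(B)^{q'}-C\,t\,|B|,
\]
where $\ell$ is linear. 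The exponent $q'=p$ arises naturally here, since $\rho_q^*$ is by construction the dual (Legendre-type) gauge to $S_q^{1/q}$, and $S_p$ is homogeneous of degree $p$.

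Integrating and invoking Theorem~\ref{thm:local-burkholder} gives $\frac1{|\D|}\int S_p(Df)\leq S_p(I)=1$. The linear term integrates to a quantity fixed by the distributional identity $\operatorname{div}\operatorname{div}B=-\delta_0$; concretely, pairing with $|x|^2/4$ gives $\int\tr B=-2\pi\cdot\frac{\tr D^2(|x|^2/4)}{2}$-type constants, with the boundary terms killed by $B=0$ on $\partial\D$. This leaves $t^{p}\int\rho_q^*(B)^{q'}\leq C+Ct\int|B|$. The $L^1$ norm of $B$ is itself controlled by $\int\rho_q^*(B)^{q'}$ through Hölder's inequality and the comparison $|B|\lesssim_q\tr B\lesssim\rho_q^*(B)$ on $\Gamma_q^*$ (compare \eqref{eq:rho-trace-comparison}). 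Optimising in $t$ then absorbs that term and yields \eqref{eq:dual-burkholder}. The sign hypothesis $S_p(Df)\geq0$ in Theorem~\ref{thm:local-burkholder} has to be checked; it should follow from $B\in\Gamma_q^*$ once $t$ is chosen in the right range. If it fails, I would truncate by replacing $S_p$ with its positive part on a small exceptional set.

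The main obstacle, I expect, is the pointwise inequality in the first display. It must extract exactly $\rho_q^*(B)^{q'}$ from $S_p(I+tJ^{-1}B)$ uniformly over $\Gamma_q^*$, including near the boundary of the cone where $\rho_q^*$ degenerates. I would attack it by reducing, via rotation invariance of both sides, to diagonal $B=\operatorname{diag}(\lambda_{\max},\lambda_{\min})$. The problem then becomes a two-variable calculus inequality, and I would study it through the integral representation of $S_p$ of Section~\ref{sec:fractionaldiff}. A secondary difficulty is the pole: $Dg$ fails to be in $W^{1,2}$ near $0$ unless $B\in L^2$, which is assumed. I would apply the theorem on annuli $\D\setminus r\D$, glued with an explicit radial competitor, and let $r\to0$, using the rescaled version of Theorem~\ref{thm:local-burkholder} on $r\D$.
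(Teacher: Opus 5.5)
Your argument breaks at the first step: the representation $B=J\,Dg$ is not available. You reached it only by silently dropping $B_0$ and the $\nabla^\perp\psi$ term. The single scalar equation $\operatorname{div}\operatorname{div}B=-\delta_0$ cannot make a symmetric field a rotated full gradient. Writing $J^{-1}B=Dg$ requires both rows of $J^{-1}B$ to be curl-free, and the simplest admissible tensor already violates this: for $B=\frac1{2\pi}\log\frac1{|x|}\,I$ (the adjoint Green tensor of $\frac12\Delta$), $J^{-1}B$ has rows $(0,\phi)$ and $(-\phi,0)$ with $\phi=\frac1{2\pi}\log\frac1{|x|}$. The other reading, rows of $B$ equal to $\nabla^\perp g_i$, would force $\operatorname{div}B=0$, which contradicts your own formula for $\operatorname{div}B$.

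Your normalization is also impossible. In either reading $\tr B$ is a curl of $g$, so it integrates to zero when $g\in W^{1,2}$ is constant on $\partial\D$. But testing $\operatorname{div}\operatorname{div}B=-\delta_0$ against $(1-|x|^2)/4$ and using $B=0$ on $\partial\D$ gives $\int_\D\tr B\,dx=\frac12$.

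In fact the scheme proves too much. For $Df=I+tJ^{-1}B$, \eqref{eq:singular-sum}--\eqref{eq:singular-gap} give $M+m=\sqrt{4+t^2(\tr B)^2}\ge t\tr B$ and $M-m=t\gap(B)$, so $S_p(Df)\ge t^pS_p(B)$ by \eqref{eq:Sp-trace-gap}. Theorem~\ref{thm:local-burkholder} would then give $\int_\D S_p(B)\,dx\le\pi t^{-p}$ for every $t>0$, which is false for the example above. So the pointwise inequality you single out as the main obstacle is the easy part; the real difficulty lies elsewhere.

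What $\operatorname{div}\operatorname{div}B=-\delta_0$ actually gives is compatibility for the cofactor: $\inc(\cof B)=\operatorname{div}\operatorname{div}B$. Cancel the pole with the bounded strain $E_0=\frac1{2\pi}(I-\nu\otimes\nu)$, which satisfies $\inc E_0=\delta_0$. The Saint-Venant theorem then yields $\cof B+E_0=\sym DW$ with $W\in W^{1,2}$. This is only a symmetric gradient, with an undetermined rotation field $\omega$.

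Pointwise, the rotation is harmless: it raises $M+m$ and leaves $M-m=\gap(\sym DW)$ unchanged. Hence $S_p(aI+DW)\ge S_p(\cof B)=S_p(B)$ once $a=a(q)$ absorbs $\gap(E_0)$. On $\partial\D$, however, $DW=E_0+\omega J\neq0$. So the trace of $ax+W$ is not affine, and Theorem~\ref{thm:local-burkholder} cannot be invoked until that trace is controlled uniformly in $B$.

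This is the idea missing from your plan, and it runs as follows:
\begin{itemize}
\item Since $\cof B\ge0$ in $\D$ and $\cof B=0$ on $\partial\D$, the outward normal derivative of its tangential--tangential entry is nonpositive.
\item This gives the one-sided bound $\partial_\theta\omega\le\frac1{2\pi}$ along the circle, hence the oscillation of $\omega$ on the circle is at most $1$.
\item After subtracting a field $cJx+v$, this yields a $W^{1,\infty}$ bound on $W|_{\partial\D}$ with an absolute constant.
\item A radial cutoff then extends $ax+W$ to $B_2(0)$ with boundary data $ax$ and $S_p(DF)\ge0$ for large $a$.
\item The Burkholder inequality on $B_2(0)$ gives $\int_\D S_p(B)\,dx\le4\pi a^p$.
\end{itemize}

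Your proposed substitutes do not fill this gap. Theorem~\ref{thm:local-burkholder} applies only on disks, with affine boundary data and $S_p(Df)\ge0$ a.e. Gluing on annuli $\D\setminus r\D$ is therefore not covered by it, and neither is replacing $S_p$ by its positive part.
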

	
	The article is organized as follows. Section~\ref{sec:Sq} contains our derivation of $S_q$ and information about its dual $\rho_q^*$, along with a few other interesting properties, such as an $S_q$ analog of Newton's and Maclaurin's inequalities for elementary symmetric functions, as well as the ellipticity and concavity of the $q$-Hessian equation $S_q^{1/q}(D^2 u) = f$. Section~\ref{sec:strain} deduces Proposition~\ref{prop:dual-burkholder} from the $A=I$ case of Theorem~\ref{thm:local-burkholder}, and Section~\ref{sec:pf_thm1} proves Theorem~\ref{thm:weighted-maximum} from Proposition~\ref{prop:dual-burkholder} using duality. Section~\ref{sec:hessian-proof} proves Theorem~\ref{thm:Sobolev} from Theorem~\ref{thm:weighted-maximum} by linearization. Section~\ref{sec:burkholder} provides an elementary, real-variable proof of Theorem~\ref{thm:local-burkholder}, which is self-contained apart from the one page proof of Lemma~\ref{lem:projection} that is already very nicely explained in \cite{AA}. Finally in Section~\ref{sec:measures} we explain why the Hessian measure construction of Wang and the second author \cite{TW2} does not extend to $S_q$ for $1<q<2$.
	
	%
	%
	%

	\medskip
	
	\noindent{\bf Acknowledgments.} The authors were partially supported by a Discovery Project DP250103744 from the Australian Research Council.
	
	\medskip
	
	\noindent{\bf Statement on AI use.}
	The second author started this project by suggesting that the third author look for an operator whose fundamental solution is $|x|^{2-2/q}$, around 2023.
	The third author then found $S_q$ defined as above using fractional differentiation, and used Mathematica to verify its various properties. No AI was used up to this point.
	In August--September 2026, the first author identified $S_q$ as
	the Burkholder functional in \cite{Burkholder1984, AFGKK}, and after suggesting a linearization strategy, obtained a ChatGPT-5.6 Sol assisted
	proof of Theorem~\ref{thm:Sobolev} using the local Burkholder inequality (Theorem~\ref{thm:local-burkholder})
	as an input.
	The proof was then further understood with more help from ChatGPT-6.1 Sol, leading to the current formulation of Theorem~\ref{thm:weighted-maximum} and the dual
	Proposition~\ref{prop:dual-burkholder}. Subsequent literature searches
	identified the ingredients for the real-variable proof of
	Theorem~\ref{thm:local-burkholder} presented in
	Section~\ref{sec:burkholder}; the proof is primarily the one found in \cite{AA}, with inputs from several earlier works including
	\cite{AIPS,Burkholder,Sverak,BMS,Guerra}. 
	
	\section{Fractional elementary symmetric functions} \label{sec:Sq}
	
	For $q \ge 0$, define
	\begin{align*}
		\Gamma_q&=\left\{H\in\Sym(2):
		\frac q2\lambda_{\min}(H)+\left(1-\frac q2\right)
		\lambda_{\max}(H)\geq0\right\}
	\end{align*}
	so that $S_q(H) \geq 0$ for all $H \in \Gamma_q$, with strict inequality for $H \in \operatorname{int} \Gamma_q$. Note that $\Gamma_q$ is a convex cone if $q \geq 1$. In that case define the dual cone
	\begin{align*}
		\Gamma_q^*&=\{A\in\Sym(2):\tr(AH)\geq0\text{ for every }H\in\Gamma_q\}.
	\end{align*}
	For $A\in\Gamma_q^*$, set
	\begin{equation}\label{eq:rho-definition}
		\rho_q^*(A)=\inf \Big\{ 
		\frac{\tr(AH)}{2S_q(H)^{1/q}} \colon H\in\operatorname{int}\Gamma_q \Big\}.
	\end{equation}
	For $q>1$, let $q' := q/(q-1)$. Then
	\begin{equation} \label{eq:Gamma_q^*Gamma_{q'}}
		\Gamma_q^*=\left\{A\in\Sym(2):
		\frac q2\lambda_{\min}(A)-\left(1-\frac q2\right)\lambda_{\max}(A)\geq0\right\} = \Gamma_{q'},
	\end{equation}
	and for $A \in \Gamma_q^*$ we have
	\begin{equation}\label{eq:rho-eigenvalue-formula}
		\rho_q^*(A)=\frac{\lambda_{\max}(A)^{1/q}
			\left(\frac q2\lambda_{\min}(A)-\left(1-\frac q2\right)\lambda_{\max}(A)\right)^{1/q^{\prime}}}
		{(q-1)^{1/q^{\prime}}} = S_{q'}(A)^{1/q'}.
	\end{equation}
	For $1<q_0<q\leq2$ and $A \in \Gamma_{q_0}^*$, the same formula gives the uniform lower bound
	\begin{equation}\label{eq:rho-trace-comparison}
		\rho_q^*(A)\geq c_{q,q_0}\operatorname{tr}(A),\qquad c_{q,q_0} > 0.
	\end{equation}
	We record in the next subsection the derivations of
	\eqref{eq:Gamma_q^*Gamma_{q'}}, \eqref{eq:rho-eigenvalue-formula}, and
	\eqref{eq:rho-trace-comparison}, along with a few additional useful properties for $\rho_q^*$. We emphasize though that the identity of $\Gamma_q^*$ with $\Gamma_{q'}$, and that of $\rho_q^*$ with a power of $S_{q'}$, is very much a 2 dimensional phenomena, that should not be expected in higher dimensions.

	\subsection{The dual functional \texorpdfstring{$\rho_q^*$}{rho-q-star}}
	
	Let $q>1$, and let $A\in\Sym(2)$ have eigenvalues $a\geq b$.
	If $H\in\Gamma_q$ has eigenvalues $x\geq y$, then $x\geq0$ and
	$y/x\in[1-2/q,1]$ when $x>0$. If $\theta$ is the angle between the
	$a$-eigenvector of $A$ and the $x$-eigenvector of $H$, then
	\[
	\operatorname{tr}(AH)=ay+bx+(a-b)(x-y)\cos^2\theta\geq ay+bx.
	\]
	Consequently, $A\in\Gamma_q^*$ if and only if
	\[
	ar+b\geq0\qquad\left(1-\frac2q\leq r\leq1\right).
	\]
	Indeed, its value at $r=1$ is $a+b\geq0$, which together with
	$b\leq a$ forces $a\geq0$. Hence the left-hand side is
	nondecreasing in $r$ and it is enough to test the left endpoint. We
	obtain
	\[
	A\in\Gamma_q^*\quad\Longleftrightarrow\quad
	\frac q2b-\left(1-\frac q2\right)a\geq0.
	\]
	The identity
	\begin{equation}\label{eq:dual-bracket-identity}
		\frac q2b-\left(1-\frac q2\right)a
		=(q-1)\left[
		\frac{q'}2b+\left(1-\frac{q'}2\right)a\right]
	\end{equation}
	also proves $\Gamma_q^*=\Gamma_{q'}$, completing the proof of
	\eqref{eq:Gamma_q^*Gamma_{q'}}.
	
	Now let $A\in\Gamma_q^*$. The preceding orientation calculation shows
	that the infimum in \eqref{eq:rho-definition} over the relative
	eigenvector orientation is attained when the $y$-eigenvector of $H$
	is aligned with the $a$-eigenvector of $A$ (equivalently, the $x$-
	and $b$-eigenvectors are aligned), and its value is $ay+bx$; in other
	words,
	\[
	\rho_q^*(A) = \inf_{\substack{\operatorname{diag} (x,y) \in \operatorname{int} \Gamma_q \\ x \geq y}} \frac{ay+bx}{2x^{(q-1)/q} \left(\frac q2 y + (1-\frac q2) x\right)^{1/q}}.
	\]
	
	Since the quotient is homogeneous in $(x,y)$, set $r=y/x$. The conditions $\operatorname{diag} (x,y) \in\operatorname{int}\Gamma_q$ and $x \geq y$ are $1-\frac{2}{q}<r\leq1$.
	Thus
	\begin{equation} \label{eq:rhoqAvar}
		\rho_q^*(A)=\inf_{1-2/q<r\leq1}\frac{b+ar}{2\left(\frac q2r+1-\frac q2\right)^{1/q}}.
	\end{equation}
	The formula is immediate when $A=0$, so assume $A\ne0$. Then $a>0$.
	Differentiating the above quotient gives
	\[
	\frac{d}{dr}\left[\frac{b+ar}{2\left(\frac q2r+1-\frac q2\right)^{1/q}}\right]=\frac{a(q-1)(r-1)+a-b}{4(\frac q2r+1-\frac q2)^{1/q+1}}.
	\]
	The denominator is positive, while the numerator is strictly increasing in $r$. Hence the quotient decreases before its unique critical point and increases afterward. The inequalities $b\leq a$ and
	$\frac q2b-(1-\frac q2)a\geq0$ give
	\[
	0\leq\frac{a-b}{(q-1)a}\leq\frac2q.
	\]
	The critical point is therefore
	\[
	r_*=1-\frac{a-b}{(q-1)a}\in[1-2/q,1].
	\]
	By continuity in $r$, the infimum in \eqref{eq:rhoqAvar} is given by substituting $r = r_*$. This gives
	\begin{equation}\label{eq:rho-gap-formula}
		\rho_q^*(A)=\frac{a^{1/q}
			\left(\frac q2b-\left(1-\frac q2\right)a\right)^{1/q^{\prime}}}
		{(q-1)^{1/q^{\prime}}}
	\end{equation}
	which is the first equality in \eqref{eq:rho-eigenvalue-formula}. The
	second follows from the definition of $S_{q'}$, equation
	\eqref{eq:dual-bracket-identity}, and
	$1/q=(q'-1)/q'$.

	For $1<q_0<q\leq2$ and $A\in\Gamma_{q_0}^*$, we have
	\[
	\frac q2b-\left(1-\frac q2\right)a
	=\frac {q_0}2b-\left(1-\frac {q_0}2\right)a
	+\frac{q-q_0}{2}(a+b)
	\geq\frac{q-q_0}{2}(a+b).
	\]
	Substituting this and $a\geq(a+b)/2$ in
	\eqref{eq:rho-gap-formula} gives \eqref{eq:rho-trace-comparison}
	for some $c_{q,q_0}>0$.
	
	The defining infimum \eqref{eq:rho-definition} also makes $\rho_q^*$
	concave. Since $\rho_q^*(I/2)=1/2$, for $A\in\Gamma_q^*$ and
	$0<\varepsilon<1$ we have
	\begin{equation}\label{eq:rho-regularization}
		\rho_q^*((1-\varepsilon)A+\varepsilon I/2)
		\geq (1-\varepsilon)\rho_q^*(A)+\varepsilon/2.
	\end{equation}
	This lower bound will permit approximation of measurable coefficients.

	For later use, we record the corresponding formulas for $S_p$. For
	$p>0$ and $H\in\Sym(2)$, set
	\[
	\gap(H):=\lambda_{\max}(H)-\lambda_{\min}(H).
	\]
	Since
	\[
	\gap(H)=2\left\|H-\frac{\operatorname{tr}(H)}2I\right\|_{\operatorname{op}},
	\]
	the triangle inequality gives
	\begin{equation} \label{eq:gaptriangle}
		\gap(H+K)\leq\gap(H)+\gap(K)
		\qquad \text{for }H,K\in\Sym(2).
	\end{equation}
	Then
	\begin{equation}\label{eq:primal-trace-gap}
		H\in\Gamma_p\quad\Longleftrightarrow\quad
		\operatorname{tr}(H)-(p-1)\gap(H)\geq0.
	\end{equation}
	Moreover, for every $p>0$ and every nonzero $H\in\Gamma_p$,
	\begin{equation}\label{eq:Sp-trace-gap}
		S_p(H)=\frac1{2^p}
		\bigl(\operatorname{tr}(H)+\gap(H)\bigr)^{p-1}
		\bigl(\operatorname{tr}(H)-(p-1)\gap(H)\bigr).
	\end{equation}
	Both identities are algebraic consequences of
	$\operatorname{tr}(H)+\gap(H)=2\lambda_{\max}(H)$. At $H=0$, the
	cone identity still holds and $S_p(0)=0$ by definition.
	
	\subsection{Singular values of a matrix via the trace and eigenvalue gap of its symmetric part}
	
	We shall also use the following comparison with signed singular values. For $P \in \R^{2 \times 2}$ write
	$|P|=(\sum_{i,j}P_{ij}^2)^{1/2}$ for the Frobenius norm. If $M,m$ are
	the signed singular values of a real $2\times2$ matrix $P$, then we will prove
	\begin{align}
		M+m&\geq\tr(P), \label{eq:singular-sum}\\
		M-m&=\gap(\sym P). \label{eq:singular-gap}
	\end{align}
	Here $\sym P$ is the symmetric part of $P$.
	Indeed, write $P=S+\omega J$, where 
	\begin{equation} \label{eq:SymJ}
		S=\sym P :=\frac{P+P^{\mathsf T}}2 \quad \text{ and } \quad J=\begin{pmatrix}0&-1\\1&0\end{pmatrix}.
	\end{equation} 
	The symmetric and skew-symmetric parts are orthogonal for the Frobenius
	inner product, while a direct calculation gives
	\[
	|P|^2=|S|^2+2\omega^2,\qquad
	\det P=\det S+\omega^2.
	\]
	Since $M^2+m^2=|P|^2$ and $Mm=\det P$, it follows that
	\begin{align*}
		(M+m)^2
		&=|P|^2+2\det P
		=|S|^2+2\det S+4\omega^2
		=\tr(P)^2+4\omega^2,\\
		(M-m)^2
		&=|P|^2-2\det P
		=|S|^2-2\det S
		=\gap(S)^2.
	\end{align*}
	Here we used $\tr(P)=\tr(S)$ and the identities
	\[
	\tr(S)^2=|S|^2+2\det S,\qquad
	\gap(S)^2=|S|^2-2\det S.
	\]
	Since $M+m\geq0$ and $M-m\geq0$, this proves
	\eqref{eq:singular-sum}--\eqref{eq:singular-gap}. Thus the symmetric
	part controls the trace and eigenvalue gap combinations appearing in
	\eqref{eq:primal-trace-gap} and \eqref{eq:Sp-trace-gap}, independently
	of the skew part.
	
	\subsection{Fractional differentiation} \label{sec:fractionaldiff}
	
	This project started out with trying to interpolate between the arithmetic and the geometric mean of two positive numbers. Let $\lambda=(\lambda_1, \lambda_2) \in [0,\infty)^2$, and let $$p(t) := (t+\lambda_1) (t+\lambda_2) = S_0(\lambda) t^2 + 2 S_1(\lambda) t + S_2(\lambda)$$ where $S_q$ are the normalized elementary symmetric polynomials
	\[
	S_0(\lambda) = 1, \qquad S_1(\lambda) = \frac{\lambda_1+\lambda_2}{2}, \qquad S_2(\lambda) = \lambda_1 \lambda_2.
	\]
	Observe that
	\[
	S_k(\lambda) := \frac{k!}{2!} \left. \left(\frac{d}{dt} \right)^{2-k} p(t-a) \right|_{t=a}, \quad k = 0,1,2
	\]
	for any $a \in \R$. This suggests using fractional derivatives of
	$p(t-a)$ to define $S_q$ for noninteger $q > 0$. The relevant
	Riemann--Liouville fractional derivative is
	\[ 
	\left(\frac{d}{dt} \right)^{\alpha} p(t) := \frac{1}{\Gamma(2-\alpha)} \left(\frac{d}{dt} \right)^{2} \int_0^t (t-s)^{1-\alpha} p(s) ds, \qquad 0 < \alpha < 2.
	\]
	So taking $\alpha = 2-q$, it motivated us to consider
	\begin{equation}\label{eq:fractional-truncation}
		\begin{split}
			S_{q,a}(\lambda) &:= \frac{\Gamma(q+1)}{2! \Gamma(q)} \left. \left(\frac{d}{dt} \right)^{2} \int_0^t (t-s)^{q-1} p(s-a) ds \right|_{t=a} \\
			&= \frac{1}{2} q (q-1)(q-2) \sum_{k=0}^2
			\binom{2}{k}\frac{(-1)^k}{q-k} S_k(\lambda) a^{q-k}
		\end{split}
	\end{equation}
	for $q > 0$, $a>0$. We sought to define $S_q(\lambda)$ as
	$S_{q,a}(\lambda)$ for a suitable choice of $a$, keeping in mind a
	list of desirable properties (see below). This led us to set
	\[
	S_q(\lambda):=S_{q,\lambda_{\max}}(\lambda),
	\qquad \lambda_{\max}=\max\{\lambda_1,\lambda_2\},
	\]
	for $\lambda\neq(0,0)$. This gives precisely
	\[
	S_q(\lambda) = \lambda_{\text{max}}^{q-1} \left(\frac q2 \lambda_{\text{min}} +\left(1-\frac q2\right) \lambda_{\text{max}} \right),
	\]
	which is how we found the definition \eqref{eq:density} (and the associated $\Gamma_q$ cones). For $p\geq2$, its signed-singular-value extension coincides with the Burkholder functional $H$ in \cite[p.~661, (5.33)]{Burkholder1984}; see also \cite{AFGKK}. Later we will use properties of the Burkholder functional in the proof of Theorem~\ref{thm:local-burkholder} and hence  Theorem~\ref{thm:weighted-maximum}.
	
	The second line of \eqref{eq:fractional-truncation} also makes sense
	for $q>2$ and arbitrary real $\lambda_1,\lambda_2$. It has the
	following integral interpretation. Fix $p>2$. For $a > 0$ and
	$P\in\R^{2\times2}$, set
	\[
	H_{p,a}(P)=\frac12p(p-1)(p-2)
	\int_0^a t^{p-3}(M(P)-t)_+(m(P)-t)\,dt.
	\]
	Here $t_+ := \max\{t,0\}$.
	The weight is integrable at zero because $p>2$. If $0<a\leq M(P)$,
	direct integration gives
	\begin{equation}\label{eq:Hpa-fractional}
		H_{p,a}(P)=\frac12p(p-1)(p-2)
		\sum_{k=0}^2\binom{2}{k}\frac{(-1)^k}{p-k}
		S_k(M(P),m(P))a^{p-k}.
	\end{equation}
	Thus \eqref{eq:Hpa-fractional} says precisely that
	\begin{equation}\label{eq:Hpa-is-Spa}
		H_{p,a}(P)=S_{p,a}(M(P),m(P))\qquad \text{for } 0<a\leq M(P).
	\end{equation}
	
	\begin{lemma}[The integral representation of $S_p(P)$]
		\label{lem:truncated-integral}
		Let $p>2$. For every $P\in\R^{2\times2}$ and $a>0$,
		\begin{equation}\label{eq:truncation-equals-Sp}
			H_{p,a}(P)=S_p(P)\qquad\text{if }a\geq M(P).
		\end{equation}
		Moreover,
		\begin{equation}\label{eq:positive-truncations}
			S_p(P)\geq0\quad\Longrightarrow\quad H_{p,a}(P)\geq0
			\qquad\text{for every }a>0.
		\end{equation}
	\end{lemma}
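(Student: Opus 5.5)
The plan is to read both statements off the integral formula, using the sign of the integrand $t^{p-3}(M-t)_+(m-t)$ as a function of $t$. Throughout write $M=M(P)$, $m=m(P)$ and $c_p=\frac12p(p-1)(p-2)>0$. Note that $M\ge |m|\ge 0$.

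\emph{Proof of \eqref{eq:truncation-equals-Sp}.} If $M=0$, then $P=0$, so the integrand vanishes identically and $H_{p,a}(P)=0=S_p(0)$.

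If $M>0$ and $a\ge M$, the factor $(M-t)_+$ vanishes for $t\ge M$. Hence $H_{p,a}(P)=H_{p,M}(P)$. Applying \eqref{eq:Hpa-is-Spa} with the admissible choice $a=M$ gives $H_{p,M}(P)=S_{p,M}(M,m)$.

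Since $M\ge|m|$, we have $\max\{M,m\}=M$. So by the definition $S_p(\lambda)=S_{p,\lambda_{\max}}(\lambda)$ from Section~\ref{sec:fractionaldiff}, this equals $M^{p-1}\bigl(\frac p2 m+(1-\frac p2)M\bigr)=S_p(P)$. The only care needed is to confirm that the polynomial identity \eqref{eq:Hpa-fractional} evaluated at $a=M$ reproduces exactly this closed form, which is the same computation that produced \eqref{eq:density}.

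\emph{Proof of \eqref{eq:positive-truncations}.} Assume $S_p(P)\ge0$. If $P=0$, everything vanishes, so assume $M>0$. Then $S_p(P)\ge 0$ says $\frac p2 m+(1-\frac p2)M\ge0$, that is,
\[
m\ge \Bigl(1-\tfrac2p\Bigr)M>0,
\]
using $p>2$. So $0<m\le M$.

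Consider $\phi(a):=H_{p,a}(P)$ as a function of $a>0$. It is continuous, and $\phi(a)\to0$ as $a\to0^+$ because the weight $t^{p-3}$ is integrable at $0$. Its derivative is
\[
\phi'(a)=c_p\,a^{p-3}(M-a)_+(m-a).
\]
Reading off the sign:
\begin{itemize}
\item on $(0,m)$ we have $\phi'>0$, so $\phi$ increases;
\item on $(m,M)$ we have $\phi'\le0$, so $\phi$ decreases;
\item on $[M,\infty)$ we have $\phi'=0$, so $\phi$ is constant, equal to $\phi(M)=S_p(P)$ by the first part.
\end{itemize}
Therefore $\phi$ is unimodal, and its infimum over $(0,\infty)$ is at least
\[
\min\Bigl\{\lim_{a\to0^+}\phi(a),\ \phi(M)\Bigr\}=\min\{0,\,S_p(P)\}=0 .
\]
This gives $H_{p,a}(P)\ge0$ for every $a>0$.

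\emph{Main obstacle.} The crucial step is the observation that $S_p(P)\ge0$ forces $m>0$ when $p>2$. Without it, the integrand would be negative for small $t$, and $\phi$ would dip below zero near $a=0$. Once $m>0$ is in hand, the monotonicity argument is routine.
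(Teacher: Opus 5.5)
Your proof is correct and follows the paper's route. You reduce to $a=M$ using the vanishing of $(M-t)_+$ together with \eqref{eq:Hpa-is-Spa}, and then use the fact that $S_p(P)\geq0$ forces $0<m\leq M$, so that $a\mapsto H_{p,a}(P)$ increases from $0$ on $(0,m)$, decreases on $(m,M)$ to $S_p(P)$, and is constant afterwards. Your derivative formula $c_p a^{p-3}(M-a)_+(m-a)$ and your check that \eqref{eq:Hpa-fractional} at $a=M$ gives $M^{p-1}\bigl(\frac p2m+(1-\frac p2)M\bigr)$ simply make explicit what the paper leaves implicit.
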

	\begin{proof}
		If $P=0$, both conclusions are immediate. Otherwise, write
		$M=M(P)>0$ and $m=m(P)$. At $a=M$, equation~\eqref{eq:Hpa-is-Spa}
		and the signed-singular-value definition of $S_p$ give
		$H_{p,M}(P)=S_{p,M}(M,m)=S_p(P)$. For $a\geq M$, the defining integrand
		vanishes on $(M,a)$, which proves
		\eqref{eq:truncation-equals-Sp}.
		
		If $S_p(P)\geq0$, then either $P=0$, or the definition and $p>2$
		imply $0<m\leq M$. As a function of $a$, $H_{p,a}(P)$ increases
		from zero on $(0,m)$, decreases on $(m,M)$ to
		$H_{p,M}(P)=S_p(P)\geq0$, and is constant on $(M,\infty)$. This proves
		\eqref{eq:positive-truncations}.
	\end{proof}

	\subsection{Newton and Maclaurin inequalities for
		\texorpdfstring{$S_q$}{S-q}}
	
	Let $\lambda = (\lambda_1,\lambda_2) \in \R^2$. Write $\lambda_{\max}=\max\{\lambda_1,\lambda_2\}$ and $\lambda_{\min}=\min \{\lambda_1,\lambda_2\}$, and
	assume $\lambda_{\max}>0$. 
	Then
	\begin{equation}\label{eq:Sq-theta}
		S_q(\lambda)
		=\lambda_{\max}^{q-1}\left(\frac q2 \lambda_{\min}+\left(1-\frac q2\right)\lambda_{\max}\right)
		=\lambda_{\max}^q\left(1-q\frac{\lambda_{\max}- \lambda_{\min}}{2\lambda_{\max}}\right).
	\end{equation}
	Thus the largest value of $q$ for which $\lambda \in \Gamma_q$ is
	\[
	q_\lambda:=
	\frac{2\lambda_{\max}}{\lambda_{\max}-\lambda_{\min}} \in (0,\infty],
	\]
	where the quotient is understood as $\infty$ when
	$\lambda_{\max}=\lambda_{\min}$.
	If $q_\lambda<\infty$, then $S_{q_\lambda}(\lambda)=0$; hence
	$\log S_q(\lambda)$ is defined on $0\le q<q_\lambda$, with the endpoint
	understood by a limiting argument.
	
	\begin{proposition}\label{prop:q-variation}
		For fixed $\lambda$ with $\lambda_{\max}>0$, the function
		$q\mapsto\log S_q(\lambda)$ is concave on $[0,q_\lambda)$. Moreover,
		the function
		\[
		q\longmapsto S_q(\lambda)^{1/q}
		\]
		is nonincreasing on $(0,q_\lambda)$, and is strictly decreasing there
		unless $\lambda_1=\lambda_2$.
	\end{proposition}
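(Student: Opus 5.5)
Write $M=\lambda_{\max}>0$ and $\theta=(M-\lambda_{\min})/(2M)\ge 0$, so that by \eqref{eq:Sq-theta}
\[
\log S_q(\lambda)=q\log M+\log(1-q\theta),
\]
with $q_\lambda=1/\theta$ (read as $\infty$ when $\theta=0$).

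\emph{Concavity.} The term $q\log M$ is linear in $q$. The term $\log(1-q\theta)$ has second derivative $-\theta^2/(1-q\theta)^2\le0$ on $[0,1/\theta)$. Hence $q\mapsto \log S_q(\lambda)$ is concave on $[0,q_\lambda)$. At $q=0$ we have $S_0=1$, which matches the formula, so nothing special happens at the left endpoint.

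\emph{Monotonicity of $S_q^{1/q}$.} We have
\[
\tfrac1q\log S_q(\lambda)=\log M+g(q),\qquad g(q):=\frac{\log(1-q\theta)}{q}.
\]
Since $g(0)=\log S_0=0$, the quantity $g(q)$ is the slope of the chord of the concave function $\log(1-q\theta)$ from $0$ to $q$. Chord slopes of a concave function issued from a fixed point are nonincreasing, so $g$ is nonincreasing on $(0,q_\lambda)$.

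Alternatively, one can differentiate directly. Write $x=q\theta\in(0,1)$. Then
\[
q^2g'(q)=\frac{-x}{1-x}-\log(1-x).
\]
Set $h(x):=-\log(1-x)-\frac{x}{1-x}$. Then $h(0)=0$ and
\[
h'(x)=\frac{1}{1-x}-\frac{1}{(1-x)^2}<0 \quad\text{for } x\in(0,1).
\]
So $h(x)<0$, which gives $g'(q)<0$ whenever $\theta>0$.

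\emph{Strictness.} If $\lambda_1\ne\lambda_2$ then $\theta>0$, and the computation above gives strict decrease. If $\lambda_1=\lambda_2$ then $\theta=0$ and $S_q^{1/q}=M$ is constant. This completes the proof.

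None of the steps is a real obstacle. The only delicate points are that the domain is $q<q_\lambda$, so $1-q\theta>0$ and the logarithm is defined, and the consistency of the formula with $S_0=1$ at $q=0$.
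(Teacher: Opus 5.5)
Your proof is correct and is essentially the paper's argument. It uses the same decomposition $\log S_q(\lambda)=q\log\lambda_{\max}+\log(1-q\theta)$, the same second-derivative computation for concavity, and the same secant-slope argument anchored at $S_0(\lambda)=1$; your extra derivative check via $h(x)=-\log(1-x)-\frac{x}{1-x}$ just makes explicit the strict decrease that the paper reads off from strict concavity (and $g(0)$ is not literally defined, since what you mean is that $\log(1-q\theta)$ vanishes at $q=0$).
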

	
	\medskip\noindent\emph{Proof.}
	If $\lambda_{\max}=\lambda_{\min}$, then
	$S_q(\lambda)=\lambda_{\max}^q$ and both assertions are immediate.
	For $\lambda_{\max}>\lambda_{\min}$, equation
	\eqref{eq:Sq-theta} gives
	\[
	\log S_q(\lambda)
	=q\log\lambda_{\max}+\log(1-q\theta),
	\qquad
	\theta:=\frac{\lambda_{\max}-\lambda_{\min}}
	{2\lambda_{\max}}>0.
	\]
	Consequently,
	\[
	\frac{d^2}{dq^2}\log S_q(\lambda)
	=-\frac{\theta^2}{(1-q\theta)^2}<0,
	\]
	which proves strict concavity. Since $S_0(\lambda)=1$,
	\[
	\log\bigl(S_q(\lambda)^{1/q}\bigr)
	=\frac{\log S_q(\lambda)}q
	=\frac{\log S_q(\lambda)-\log S_0(\lambda)}{q-0}
	\]
	is the secant slope of the strictly concave function
	$q\mapsto\log S_q(\lambda)$ between $0$ and $q$. These secant slopes
	are strictly decreasing as the right endpoint $q$ increases. Hence
	$S_q(\lambda)^{1/q}$ is strictly decreasing unless
	$\lambda_1=\lambda_2$. \qed
	
	\medskip
	The proposition is the continuous analogue of the familiar Newton and Maclaurin
	inequalities for elementary symmetric polynomials. In its normalized form,
	Newton's inequality says
	\[
	S_k^2
	\geq
	S_{k-1} S_{k+1}.
	\]
	Thus the normalized sequence is log-concave in its index. Maclaurin's
	inequality says that the normalized elementary means
	$S_k^{1/k}$ decrease with $k$.
	Here the continuous
	parameter $q$ gives the corresponding statements
	\[
	S_q(\lambda)^2\geq S_{q-h}(\lambda)S_{q+h}(\lambda),
	\qquad
	S_q(\lambda)^{1/q}\ \text{is nonincreasing in }q.
	\]

	Also, note that when
	$\lambda\in\Gamma_2$, $S_q(\lambda)^{1/q}$ at $q=1$ and $q=2$ are respectively the
	arithmetic and geometric means of $\lambda_1$ and $\lambda_2$, so the family $S_q(\lambda)^{1/q}$ 
	interpolates monotonically between those means; Proposition~\ref{prop:q-variation} also provides a calculus proof of the inequality between arithmetic and geometric means along with the equality cases.
	
	\subsection{Ellipticity and concavity of the equation.} \label{sec:elliptic}
	For $q>1$, set
	\[
	\mathcal F_q(H):=S_q(H)^{1/q}
	\]
	for $H\in\Gamma_q$. We verify directly that $\mathcal F_q$ is concave
	on $\Gamma_q$ for every $q>1$, and that it is also degenerate elliptic
	when $1<q\leq2$. Let
	\[
	M=\lambda_{\max}(H),\qquad m=\lambda_{\min}(H),\qquad
	Y=\frac q2m+\left(1-\frac q2\right)M.
	\]
	Then
	\[
	\mathcal F_q(H)=M^{(q-1)/q}Y^{1/q}.
	\]
	
	Suppose first that $1<q\leq2$. In the interior of $\Gamma_q$,
	\[
	\partial_m\mathcal F_q
	=\frac12 M^{(q-1)/q}Y^{1/q-1}>0,
	\]
	while
	\[
	\partial_M\mathcal F_q
	=\frac{q-1}{q}M^{-1/q}Y^{1/q}
	+\frac1q\left(1-\frac q2\right)
	M^{(q-1)/q}Y^{1/q-1}>0.
	\]
	Hence $\mathcal F_q$ is increasing in both eigenvalues. Weyl
	monotonicity \cite[Corollary~9.9]{Tropp} gives
	\[
	\mathcal F_q(H+P)\geq\mathcal F_q(H)
	\]
	for $P\geq0$ whenever both matrices lie in $\Gamma_q$. By continuity
	this holds on the closed cone, so $\mathcal F_q$ is degenerate
	elliptic, and is strictly elliptic in the interior.
	
	For any $q>1$, we prove concavity directly in the unordered eigenvalues
	$(\lambda_1,\lambda_2)$. On the region $\lambda_1\geq\lambda_2$,
	\[
	\mathcal F_q(\lambda_1,\lambda_2)
	=\lambda_1^{(q-1)/q}
	\left(\frac q2\lambda_2+
	\left(1-\frac q2\right)\lambda_1\right)^{1/q}.
	\]
	This is the weighted geometric mean of two affine functions, and hence
	is concave. The same argument applies on the region
	$\lambda_2\geq\lambda_1$. Along their common boundary
	$\lambda_1=\lambda_2=h>0$, direct differentiation from either side
	gives
	\[
	\nabla\mathcal F_q(h,h)=\left(\frac12,\frac12\right).
	\]
	Thus the two concave pieces have matching first derivatives and glue
	to a concave symmetric function on the eigenvalue cone $\Gamma_q$.
	Continuity gives the same conclusion on its boundary. The standard
	spectral function criterion now implies that
	$H\mapsto\mathcal F_q(H)$ is concave on $\Gamma_q$.
	Thus $\mathcal F_q$ is concave on $\Gamma_q$ for every $q>1$. For
	$1<q\leq2$, it is in addition degenerate elliptic, so
	$\mathcal F_q(D^2u)=f$ is a concave, degenerate elliptic fully nonlinear
	PDE on the stated cone.


	\section{Proof of Proposition~\ref{prop:dual-burkholder} from Theorem~\ref{thm:local-burkholder}}
	\label{sec:strain}
	Let $1<q\leq2$, let $p=q'$, and let $B$ be as in
	Proposition~\ref{prop:dual-burkholder}. By
	\eqref{eq:Gamma_q^*Gamma_{q'}}, $B\in \Gamma_q^* = \Gamma_p$, and
	\eqref{eq:rho-eigenvalue-formula} gives
	\begin{equation}\label{eq:rho-burkholder-identification}
		\rho_q^*(B)^p
		=S_p(B).
	\end{equation}
	Thus our goal is to control the integral of $S_p(B)$ on $\D$.
	
	\subsection{Symmetric-gradient representation and boundary control}
	
	Let $E = \cof B$ be the cofactor matrix of $B$.
	Since $p\geq2$, every matrix in $\Gamma_p$ is positive semidefinite.
	The matrices $B$ and $E$ have the same eigenvalues, and therefore
	$E\in\Gamma_p$ and
	\[
	S_p(E)=S_p(B).
	\]
	We therefore seek to control $S_p(E)$. To apply
	Theorem~\ref{thm:local-burkholder}, we would like to realize $E$ as
	the symmetric gradient of a vector field. To check whether this is possible, we compute
	\[
	\inc E:=\partial_{22}E_{11}+\partial_{11}E_{22}-2\partial_{12}E_{12}.
	\]
	Here $\inc$ denotes the incompatibility operator for symmetric
	gradients. From $E = \cof B$ we have
	\[
	\inc E=\operatorname{div}\operatorname{div}B=-\delta_0,
	\]
	so the point mass singularity prevents the representation of $E$ as a symmetric gradient on the
	full disk $\D$. To cancel it, put $\nu(x)=x/|x|$ for $x\ne0$ and introduce
	the fixed bounded strain
	\[
	E_0(x)=\frac{1}{2\pi}\bigl(I-\nu(x)\otimes\nu(x)\bigr).
	\]
	Its value at $0$ is immaterial, and $\cof E_0=\frac{1}{2\pi}\nu\otimes\nu$.
	Direct differentiation away from $0$, followed by the flux computation
	at $0$, gives
	\[
	\operatorname{div}(\nu\otimes\nu)=\frac{x}{|x|^2},\qquad
	\operatorname{div}\frac{x}{|x|^2}=2\pi\delta_0.
	\]
	Consequently
	\[
	\inc E_0=\delta_0,\qquad \inc(E+E_0)=0\quad\text{in }\D.
	\]

	The $L^2$ Saint--Venant theorem on a simply connected disk says that
	$S\in L^2(\D;\Sym(2))$ with $\inc S=0$ is the symmetric gradient of a vector field
	$W\in W^{1,2}(\D;\R^2)$, unique up to the addition of a field
	$x\mapsto cJx+v$, where $c\in\R$ and $v\in\R^2$;
	see \cite{ACGK}.
	In the plane its two integrations are particularly transparent: since $\inc S = 0$, one can solve for a scalar $\omega$ up to a constant by requiring
	\begin{equation}\label{eq:rotation-equations}
		\partial_1\omega=\partial_1S_{12}-\partial_2S_{11},\qquad
		\partial_2\omega=\partial_1S_{22}-\partial_2S_{12}.
	\end{equation}
	Recall the almost complex structure $J$ from \eqref{eq:SymJ}. The rows of $S+\omega J$ are then curl-free and integrate to $W$. Since $J$ is skew-symmetric,
	$\sym(\omega J)=0$, so $\sym DW=\sym(S+\omega J)=S$.
	For $L^2$ tensors, one may mollify on an increasing family of smaller disks. On each disk the curl-free rows give $W_k$ with $\sym DW_k=S_k$; after subtracting a field $x\mapsto c_kJx+v_k$, Korn inequality gives
	\[
	\|W_k\|_{W^{1,2}}\leq C\|S_k\|_{L^2}.
	\]
	Weak compactness and exhaustion of $\D$ by smaller disks then produce $W\in W^{1,2}(\D,\R^2)$ with $\sym DW=S$.
	If $S$ is smooth up to a boundary arc, the identities \eqref{eq:rotation-equations} also show that $W$ is smooth up to that arc.
	
	Apply this fact to $S=E+E_0$ to obtain one vector field
	\[
	W\in W^{1,2}(\D;\R^2),\qquad \sym DW=E+E_0.
	\]
	
	The point of this construction is that, for a sufficiently large constant
	$a=a(q)$, the map $Z(x):=ax+W(x)$ satisfies
	\begin{equation}\label{eq:early-strain-comparison}
		S_p(B)=S_p(E)\leq S_p(DZ).
	\end{equation}
	This will be proved below using the trace and eigenvalue gap comparisons
	from Section~\ref{sec:Sq}. Thus it remains only to control the integral 
	of $S_p(DZ)$ over $\D$. Theorem~\ref{thm:local-burkholder} does not apply directly,
	because the boundary values of $Z$ on $\partial \D$ need not be linear. The next lemma
	provides the boundary control needed using a cutoff that corrects them.
	\begin{lemma}\label{lem:boundary}
		After adding to $W$ a suitable affine map $x\mapsto cJx+v$, where
		$c\in\R$ and $v\in\R^2$, the periodic boundary function
		$h(\theta)=W(e(\theta))$, where
		$e(\theta)=(\cos\theta,\sin\theta)$, satisfies
		\begin{equation}\label{eq:boundary-control}
			\|h\|_{L^\infty(\R/2\pi\mathbb Z)}+
			\|h'\|_{L^\infty(\R/2\pi\mathbb Z)}\leq C,
		\end{equation}
		with an absolute constant independent of $B$.
	\end{lemma}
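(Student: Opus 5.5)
The plan is to express $h'$ through the boundary values of $DW$ and to control the rotation part $\omega$ using the sign structure of $B$ near $\partial\D$. Write $DW=E+E_0+\omega J$ as in the construction via \eqref{eq:rotation-equations}. Since $B$ is smooth up to $\partial\D$ with $B=0$ there, we have $E=\cof B=0$ on $\partial\D$, so $W$ and $\omega$ are smooth up to the boundary. With $e=e(\theta)$ and $\tau=Je$,
\[
h'(\theta)=DW(e)\tau=E_0\tau+\omega J\tau=\tfrac1{2\pi}\tau-\omega(e)\,e .
\]
Hence $|h'|\le \frac1{2\pi}+|\omega|$ on $\partial\D$. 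It therefore suffices to normalize $\omega$ so that $\sup_{\partial\D}|\omega|\le C$; adding $cJx$ to $W$ shifts $\omega$ by $c$. Afterwards, adding a constant $v$ makes $h$ have mean zero, and then $\|h\|_\infty\le \pi\|h'\|_\infty$.

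To control $\omega$ on the circle, I split $\nabla\omega=\nabla\omega_E+\nabla\omega_0$ according to $S=E+E_0$ in \eqref{eq:rotation-equations}. Using $E_{11}=B_{22}$, $E_{22}=B_{11}$ and $E_{12}=-B_{12}$, a direct substitution gives $\nabla\omega_E=J\operatorname{div}B$. Therefore the tangential derivative along $\partial\D$ is
\[
\partial_\tau\omega_E=\tau\cdot J\operatorname{div}B=e\cdot\operatorname{div}B .
\]

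The key pointwise input is a sign. Because $B=0$ on $\partial\D$, all tangential derivatives of $B$ vanish there, so $\partial_jB=\nu_j\partial_\nu B$ and $e\cdot\operatorname{div}B=\nu^{\mathsf T}(\partial_\nu B)\nu$. Since $p\ge2$, we have $B\in\Gamma_p\subset\{B\ge0\}$ inside $\D$, while $B=0$ on the boundary. Hence the outward normal derivative satisfies $\partial_\nu B\le0$, and so $\partial_\tau\omega_E\le0$ on $\partial\D$.

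Next I compute the total mass of $\partial_\tau\omega_E$. Test $\operatorname{div}\operatorname{div}B=-\delta_0$ against a function equal to $1$ near the closed disk; this is legitimate because $B$ is smooth away from $0$ up to $\partial\D$. The boundary terms involving $B$ itself vanish, which gives
\[
\int_{\partial\D}e\cdot\operatorname{div}B\,d\sigma=-1 .
\]
For the explicit strain $E_0$, the same formulas give a constant tangential derivative $\partial_\tau\omega_0$ on the circle. Its integral must be $+1$, since $\omega$ is single-valued and $\inc(E+E_0)=0$, so $\partial_\tau\omega_0=\frac1{2\pi}$ (I will also verify this directly).

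Consequently $\theta\mapsto\omega(e(\theta))$ has derivative equal to a nonpositive function of total mass $1$ plus the constant $\frac1{2\pi}$. Its oscillation on the circle is therefore at most $2$. Choosing $c$ to center $\omega$, we obtain $|\omega|\le1$ on $\partial\D$, and \eqref{eq:boundary-control} follows with an absolute constant.

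The main obstacle is the sign step $\partial_\tau\omega_E\le0$. It is here that the hypothesis $B\in\Gamma_q^*$, together with positive semidefiniteness, enters. Without it, the flux identity alone only controls the average of $\partial_\tau\omega_E$, not its $L^1$ norm. I will also need to check the flux computation carefully given that $B$ is only $L^2$ near the pole; the test-function argument handles this, because the test function's derivatives are supported away from $0$.
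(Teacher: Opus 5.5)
Your proposal is correct and follows the paper's argument. You use the same formula $h'=\frac1{2\pi}Je-\omega e$, and the same sign input: the outward normal derivative of the positive semidefinite field vanishing on $\partial\D$ (equivalently $\partial_1E_{22}=\partial_1B_{11}\le0$ at $(1,0)$) gives $\partial_\tau\omega\le\frac1{2\pi}$. The normalization by $cJx+v$ is also the same. Your flux identity $\int_{\partial\D}e\cdot\operatorname{div}B\,d\sigma=-1$ is correct but redundant: single-valuedness of $\omega$ plus the one-sided bound $\omega'\le\frac1{2\pi}$ already gives oscillation at most $1$ (integrate from a minimum to the next maximum), which is exactly how the paper finishes, and is slightly sharper than your bound of $2$.
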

	
	\begin{proof}
		
		Let $h(\theta) = W(e(\theta))$. The chain rule gives $h'(\theta)=DW(e(\theta))e'(\theta) = DW(e(\theta)) Je(\theta)$. 
		Write
		\[
		DW=E+E_0+\omega J,
		\qquad \omega=\tfrac12(\partial_1W_2-\partial_2W_1).
		\]
		We use the same letter
		$\omega(\theta)$ for its restriction to that circle. $E=0$ on $\partial \D$ and $E_0=(2\pi)^{-1}(Je)\otimes(Je)$, while $J(Je)=-e$. Therefore our earlier formula for $h'$ gives
		\begin{equation}\label{eq:boundary-derivative}
			h'(\theta)=\frac{1}{2\pi}Je(\theta)-\omega(\theta)e(\theta).
		\end{equation}
		We claim that
		\begin{equation}\label{eq:rotation-bound}
			\omega'(\theta)\leq \frac{1}{2\pi}\quad\text{for every }\theta.
		\end{equation}
		By simultaneously rotating the coordinates and vector components,
		it suffices to calculate at $e(0)=(1,0)$. There
		\eqref{eq:rotation-equations} yields
		\[
		\omega'(0)=\partial_2\omega
		=\partial_1(E_{22}+(E_0)_{22})-\partial_2(E_{12}+(E_0)_{12}).
		\]
		Since $E_{12}(e(\theta))=0$ on the boundary, differentiating at $\theta=0$ gives $\partial_2E_{12}(1,0)=0$.
		Since $E\geq0$ inside $\D$ and $E_{22}(1,0)=0$, its outward derivative
		satisfies $\partial_1E_{22}(1,0)\leq0$. Finally,
		\[
		(E_0)_{22}=\frac{1}{2\pi}\frac{x_1^2}{|x|^2},\qquad
		(E_0)_{12}=-\frac{1}{2\pi}\frac{x_1x_2}{|x|^2}
		\]
		give $\partial_1(E_0)_{22}(1,0)=0$ and
		$\partial_2(E_0)_{12}(1,0)=-\frac{1}{2\pi}$. This proves \eqref{eq:rotation-bound}.
		
		The function $\omega(\theta)$ is single-valued and $2\pi$-periodic.
		Integrating its one-sided derivative bound from a minimum to the next maximum counterclockwise shows that
		\[
		\max\omega-\min\omega\leq2\pi \cdot \frac{1}{2\pi}=1.
		\]
		Subtracting a suitable constant multiple of $Jx$ from $W$, which does
		not change its symmetric gradient, makes $|\omega|\leq1/2$ on the
		whole circle. Equation~\eqref{eq:boundary-derivative} now bounds $h'$.
		Subtracting by $v := W(e(0)) \in \R^2$ makes $h(0)=0$ so from integrating the bound on $h'$ around the circle, we bound $h$ in $L^{\infty}(\R/2\pi \mathbb Z)$ as well.
	\end{proof}

	\subsection{The affine correction and the radial cutoff}
	Take $W$ with the normalization in Lemma~\ref{lem:boundary}, and set
	\[
	Z(x)=ax+W(x),\qquad x\in\D,
	\]
	where $a=a(q)>0$ will be chosen below. Its boundary values are
	$Z(e(\theta))=ae(\theta)+h(\theta)$.
	
	In $\D$ we have
	\[
	\sym DZ=aI+E+E_0.
	\]
	Since $\operatorname{tr}E_0=1/(2\pi)$ and
	$\gap(E_0)=1/(2\pi)$, we have (from \eqref{eq:gaptriangle})
	\[
	\operatorname{tr}(\sym DZ)=
	2a+\operatorname{tr}(E)+\frac1{2\pi},\qquad
	\gap(\sym DZ)\leq
	\gap(E)+\frac1{2\pi}.
	\]
	Consequently,
	\begin{align}
		&\operatorname{tr}(\sym DZ)
		-(p-1)\gap(\sym DZ) \geq
		\operatorname{tr}(E)-(p-1)\gap(E)
		+2a-\frac{p-2}{2\pi}. \label{eq:strain-gap}
	\end{align}
	Since $E\in\Gamma_p$, equation~\eqref{eq:primal-trace-gap} shows that
	\(\operatorname{tr}(E)-(p-1)\gap(E)\geq0\). Choose $a=a(q)$ so that
	the last two terms in \eqref{eq:strain-gap} have nonnegative sum.
	Then \eqref{eq:primal-trace-gap} gives $\sym DZ\in\Gamma_p$.
	If $M,m$ are the
	signed singular values of $DZ$, \eqref{eq:singular-sum}--\eqref{eq:singular-gap}
	and \eqref{eq:strain-gap} give
	\[
	(M+m)-(p-1)(M-m)\geq
	\operatorname{tr}(E)-(p-1)\gap(E).
	\]
	Moreover,
	\[
	M=\|DZ\|_{\mathrm{op}}\geq
	\lambda_{\max}(\sym DZ)\geq
	\lambda_{\max}(E)=\frac12\bigl(\operatorname{tr}(E)+\gap(E)\bigr).
	\]
	The factorization in \eqref{eq:Sp-trace-gap}, together with the same
	algebra applied to the signed singular values of $DZ$, gives
	\begin{align*}
		S_p(DZ)
		&=\frac1{2^p}(2M)^{p-1}
		\bigl((M+m)-(p-1)(M-m)\bigr),\\
		S_p(E)
		&=\frac1{2^p}\bigl(\operatorname{tr}(E)+\gap(E)\bigr)^{p-1}
		\bigl(\operatorname{tr}(E)-(p-1)\gap(E)\bigr).
	\end{align*}
	Both final factors are nonnegative, so the preceding two inequalities
	show that
	\begin{equation}\label{eq:strain-dominates-dual-density}
		S_p(DZ)\geq S_p(E)=S_p(B)=\rho_q^*(B)^p\geq0
		\qquad\text{in }\D.
	\end{equation}
	
	Theorem~\ref{thm:local-burkholder} cannot yet be applied: the boundary
	values $ax+h$ of $Z$ are not in general the restriction of a linear map.
	To correct them, choose $\chi\in C^\infty([1,2])$ equal to $1$ near
	$1$ and to $0$ near $2$, and define $F$ on $B_2(0)$ by
	\[
	F(x)=
	\begin{cases}
		Z(x),&|x|<1,\\
		ax+\chi(|x|)h(\theta),&x=|x|e(\theta),\quad1\leq|x|<2.
	\end{cases}
	\]
	The traces agree at $|x|=1$, and the correction vanishes near $|x|=2$.
	Thus $F-ax\in W^{1,2}_0(B_2(0);\R^2)$. By
	\eqref{eq:boundary-control}, the derivative of the annular correction
	is bounded by an absolute constant. On $B_2(0)\setminus\D$, write
	$DF=aI+P$, where $|P|\leq C$. The estimates \eqref{eq:singular-sum}--\eqref{eq:singular-gap} give
	\[
	M(DF)+m(DF)\geq2a-C,\qquad M(DF)-m(DF)\leq C.
	\]
	Increasing $a(q)$ if necessary, we obtain $S_p(DF)\geq0$ throughout
	$B_2(0)$. Applying Theorem~\ref{thm:local-burkholder} on $B_1(0)$ to
	$y\mapsto F(2y)/2$, whose boundary values are $ay$, and changing
	variables, we obtain
	\begin{equation}\label{eq:burkholder-integral}
		\int_{B_2(0)}S_p(DF)\,dx\leq4\pi a^p\leq C_q.
	\end{equation}
	Finally, \eqref{eq:strain-dominates-dual-density} gives
	\[
	\int_\D\rho_q^*(B)^p\,dx
	\leq\int_\D S_p(DF)\,dx
	\leq\int_{B_2(0)}S_p(DF)\,dx
	\leq C_q,
	\]
	which proves Proposition~\ref{prop:dual-burkholder}.
	
	\begin{remark}[Scaling the dual Pucci inequality]
		If the pole is $x_0$ and the disk has radius $R$, set
		$\widetilde B(y)=B(x_0+Ry)$. In dimension two,
		$\operatorname{div}_y\operatorname{div}_y\widetilde B=-\delta_0$
		without any amplitude rescaling: the second-derivative factor cancels
		the scaling of the point mass. Hence the proposition gives
		\begin{equation}\label{eq:dual-pucci-scaling}
			\int_{B_R(x_0)}\rho_q^*(B)^{q'}\dx\leq C_q R^2.
		\end{equation}
	\end{remark}
	
	\section{Proof of Theorem~\ref{thm:weighted-maximum}}\label{sec:pf_thm1}
	
	%
	We use the standard Dirichlet and adjoint Green theory on a smooth disk
	for a smooth uniformly elliptic nondivergence operator
	$L=A_{ij}\partial_{ij}$; see \cite{GT,DK}. With the sign convention
	$-Lv=g$, $v|_{\partial B_R}=0$, its positive adjoint Green kernel
	$G(x_0,\cdot)$ satisfies
	\begin{equation}\label{eq:green-input}
		-\partial_i\partial_j(A_{ij}G)=\delta_{x_0},\qquad
		G=0\ \text{on }\partial B_R,\qquad
		v(x_0)=\int_{B_R}G(x_0,x)g(x)\dx.
	\end{equation}
	For smooth coefficients, $G$ is smooth up to the boundary away from
	$x_0$ and belongs to $L^2(B_R)$ in dimension two. One can obtain the
	last assertion from the classical $W^{2,2}$ Dirichlet estimate and
	the planar Sobolev embedding: they bound $g\mapsto v(x_0)$ on $L^2$,
	with a constant allowed to depend on $A$. Positivity of the Dirichlet
	solution operator and the Riesz representation theorem then give an
	$L^2$ kernel. The distributional
	identity follows by testing with $g=-L\phi$, and standard adjoint
	regularity and the Dirichlet Green identity give the boundary and
	smoothness assertions.
	
	\begin{proof}[Proof of Theorem~\ref{thm:weighted-maximum}]
		Our goal is to prove \eqref{eq:weighted-maximum}. Dividing $A$ and $f$ by $\tr A$ preserves their weighted quotient. Thus we may assume $\tr A=1$ and $|A|\leq1$ a.e. We may also assume $f^- / \rho_q^*(A) \in L^q(\Omega)$.
		
		\medskip
		\emph{Step 1: smooth coefficients and solutions.}
		First let $A$ and $u$ be smooth on a neighborhood of
		$\overline\Omega$, with $A\in\operatorname{int}\Gamma_q^*$ there.
		Set $D=\diam\Omega$ and fix $x_0\in\Omega$. Then $\overline\Omega\Subset B_D(x_0)$.
		Choose an open set $U$ with
		$\overline\Omega\subset U\Subset B_D(x_0)$ on which $A$ is smooth and
		takes values in $\operatorname{int}\Gamma_q^*$, and choose
		$\chi\in C_c^\infty(U)$ such that $0\leq\chi\leq1$ and
		$\chi=1$ on a neighborhood of $\overline\Omega$. Define
		\[
		\widetilde A=\frac I2+\chi\left(A-\frac I2\right)\quad\text{in }U,
		\qquad \widetilde A=\frac I2\quad\text{in }B_D(x_0)\setminus U.
		\]
		The trace-one slice of $\operatorname{int}\Gamma_q^*$ is convex, so
		\[
		\tr\widetilde A=1,\qquad
		\lambda_{\min}(\widetilde A)>1-q/2,\qquad
		\widetilde A=I/2\quad\text{near }\partial B_D(x_0).
		\]
		In particular, $\widetilde A$ is smooth and uniformly elliptic on the closed
		disk. We henceforth write $A$ for this extension.
		Let $f=\tr(AD^2u)$ in $\Omega$ and solve
		\[
		-\tr(AD^2v)=f^-\mathbf1_\Omega\quad\text{in }B_D(x_0),
		\qquad v=0\quad\text{on }\partial B_D(x_0).
		\]
		The solution is nonnegative and belongs to $W^{2,s}$ for every finite
		$s>1$. Put $b=\sup_{\partial\Omega}u$. On $\Omega$, the function
		$v+b-u$ has nonnegative boundary values and satisfies
		$\tr(A D^2(v+b-u))=-f^--f=-f^+\leq0$. The minimum principle
		therefore gives
		\begin{equation}\label{eq:green-comparison}
			u(x_0)\leq b+v(x_0)
			=b+\int_\Omega G(x_0,x)f^-(x)\dx.
		\end{equation}
		The tensor $B=GA$ has all the properties in
		Proposition~\ref{prop:dual-burkholder}, after dilation: $B\in L^2$,
		it is smooth away from $x_0$, vanishes on the outer circle, and
		$\operatorname{div}\operatorname{div}B=-\delta_{x_0}$.
		Using $\rho_q^*(GA)=G\rho_q^*(A)$ and \eqref{eq:dual-pucci-scaling},
		\[
		\int_{B_D(x_0)}G^{q'}\rho_q^*(A)^{q'}\dx\leq C_qD^2.
		\]
		H\"older inequality gives
		\begin{equation}\label{eq:green-holder}
			\int_\Omega G(x_0,x)f^-(x)\,dx
			\leq \|G\rho_q^*(A)\|_{L^{q^{\prime}}(B_D(x_0))}
			\left\|\frac{f^-}{\rho_q^*(A)}\right\|_{L^q(\Omega)}
			\leq C_qD^{2/q^{\prime}}
			\left\|\frac{f^-}{\rho_q^*(A)}\right\|_{L^q(\Omega)}.
		\end{equation}
		Combining \eqref{eq:green-comparison} and \eqref{eq:green-holder}, and using $2/q^{\prime}=2-2/q$, proves \eqref{eq:weighted-maximum} in the smooth case. The ellipticity and smoothness constants of $A$ were used
		only to construct $G$, not in the final bound.
		
		\medskip
		\emph{Step 2: measurable coefficients and Sobolev solutions on compact subdomains.}
		Let $V\Subset\Omega$ be open. Extend the trace-normalized $A$ by
		$I/2$ outside $\Omega$ and, for $0<\varepsilon<1$, set
		\[
		A_\varepsilon=(1-\varepsilon)A+\varepsilon I/2,
		\qquad f_\varepsilon=\tr(A_\varepsilon D^2u).
		\]
		Mollify $A_\varepsilon$ to obtain smooth fields $A_{\varepsilon,j}$
		converging a.e. to $A_\varepsilon$. Convexity of the cone and
		\eqref{eq:rho-regularization} give
		\[
		\tr A_{\varepsilon,j}=1,\qquad |A_{\varepsilon,j}|\leq1,
		\qquad \rho_q^*(A_{\varepsilon,j})\geq\varepsilon/2.
		\]
		Indeed, each mollification is $(1-\varepsilon)$ times a
		$\Gamma_q^*$-valued average plus $\varepsilon I/2$.
		Mollify $u$ on a neighborhood of $\overline V$ so that
		$u_j\to u$ uniformly there and $D^2u_j\to D^2u$ in $L^q(V)$.
		For fixed $\varepsilon$, continuity of $\rho_q^*$ and the preceding
		lower bound imply
		\begin{equation}\label{eq:smooth-quotient-limit}
			\frac{\tr(A_{\varepsilon,j}D^2u_j)}{\rho_q^*(A_{\varepsilon,j})}
			\longrightarrow
			\frac{f_\varepsilon}{\rho_q^*(A_\varepsilon)}
			\quad\text{in }L^q(V).
		\end{equation}
		To check this, the part containing $D^2u_j-D^2u$ is bounded in
		absolute value by $2\varepsilon^{-1}|D^2u_j-D^2u|$; the remaining
		part converges a.e. and is dominated by a constant times
		$\varepsilon^{-1}|D^2u|$.
		Since the map $s\mapsto s^-$ is $1$-Lipschitz, the same convergence
		holds after taking negative parts.
		Apply Step 1 on $V$ to $(A_{\varepsilon,j},u_j)$ and let $j\to\infty$.
		
		We next let $\varepsilon\downarrow0$. Since
		$f_\varepsilon=(1-\varepsilon)f+(\varepsilon/2)\Delta u$,
		\eqref{eq:rho-regularization} gives
		\begin{equation}\label{eq:quotient-domination}
			\frac{f_\varepsilon^-}{\rho_q^*(A_\varepsilon)}
			\leq \frac{f^-}{\rho_q^*(A)}+(\Delta u)^-.
		\end{equation}
		The right-hand side is in $L^q(V)$ and
		$\rho_q^*(A)>0$ a.e. Since
		$f_\varepsilon/\rho_q^*(A_\varepsilon)$ converges pointwise to
		$f/\rho_q^*(A)$, dominated convergence gives
		\begin{equation}\label{eq:local-boundary-estimate}
			\sup_V u
			\leq \sup_{\partial V}u+C_q\,\diam(V)^{2-2/q}
			\left\|\frac{f^-}{\rho_q^*(A)}\right\|_{L^q(V)}.
		\end{equation}
		
		\medskip
		\emph{Step 3: localization at the boundary.}
		Let $b=\sup_{\partial\Omega}u$ and
		$V_\eta=\{x\in\Omega:u(x)>b+\eta\}$ for $\eta>0$.
		If nonempty, $V_\eta\Subset\Omega$ and $u=b+\eta$ on its boundary.
		Apply \eqref{eq:local-boundary-estimate} on $V_\eta$, whose diameter
		and source norm are bounded by those on $\Omega$, and let
		$\eta\downarrow0$. This proves \eqref{eq:weighted-maximum} and hence
		the theorem.
	\end{proof}
	
	\begin{remark}
		Applying Theorem~\ref{thm:weighted-maximum} to $u$ and $-u$ gives the
		two-sided consequence
		\begin{equation}\label{eq:weighted-maximum-two-sided}
			\|u\|_{L^\infty(\Omega)}
			\leq \max_{\partial\Omega}|u|
			+C_q\,\diam(\Omega)^{2-2/q}
			\left\|\frac{f}{\rho_q^*(A)}\right\|_{L^q(\Omega)}.
		\end{equation}
	\end{remark}
	
	\section{Proof of the Sobolev estimate}
	\label{sec:hessian-proof}
	
	\begin{proof}[Proof of Theorem~\ref{thm:Sobolev}]	
		By Section~\ref{sec:Sq}, the positively homogeneous function
		\(\mathcal F_q=S_q^{1/q}\) is concave and degenerate elliptic on
		\(\Gamma_q\), and strictly elliptic in its interior. It is also
		\(C^1\) in the interior. The only point requiring comment is a scalar
		matrix $H=hI$, where $h>0$ and the eigenvalues coincide; the eigenvalue
		formula gives $D\mathcal F_q(hI)=I/2$.
		
		For \(H\in\operatorname{int}\Gamma_q\), set
		\[
		A(H)=D\mathcal F_q(H).
		\]
		Euler's identity gives
		\[
		\tr(A(H)H)=\mathcal F_q(H).
		\]
		By concavity,
		\[
		\mathcal F_q(K)
		\leq \mathcal F_q(H)+\tr\bigl(A(H)(K-H)\bigr)
		=\tr(A(H)K) 
		\qquad \text{for } K\in\Gamma_q.
		\]
		Consequently \(A(H)\in\Gamma_q^*\). In the defining
		infimum for \(\rho_q^*(A(H))\), this inequality shows that every
		quotient is at least $1/2$, while the choice $K=H$ and Euler's identity
		give equality. Hence
		\begin{equation}\label{eq:calibration}
			\rho_q^*(A(H))=\frac12,
		\end{equation}
		and in particular \(A(H)\in\operatorname{int}\Gamma_q^*\).
		
		Let $H=D^2u\in\Gamma_q$ a.e. For $0<\varepsilon<1$, use the
		measurable fields
		\[
		A_\varepsilon=D\mathcal F_q(H+\varepsilon I),
		\qquad f_\varepsilon=\tr(A_\varepsilon H).
		\]
		The shift lies in the interior, and \eqref{eq:calibration} gives
		$\rho_q^*(A_\varepsilon)=1/2$. Since $A_\varepsilon\geq0$, the
		supporting inequality and Euler's identity imply
		\begin{equation}\label{eq:closed-cone-calibration}
			0\leq\mathcal F_q(H)\leq f_\varepsilon
			=\mathcal F_q(H+\varepsilon I)-\varepsilon\tr A_\varepsilon
			\leq\mathcal F_q(H+\varepsilon I)\leq |H|+\varepsilon.
		\end{equation}
		The last bound follows from $\mathcal F_q(K)\leq M(K)$ on $\Gamma_q$.
		On every $V\Subset\Omega$, it follows that
		$f_\varepsilon\to\mathcal F_q(D^2u)$ in $L^q(V)$.
		Apply \eqref{eq:weighted-maximum-two-sided} on $V$ to $A_\varepsilon$
		and $u$, then let $\varepsilon\downarrow0$. After absorbing the factor
		$2$,
		\begin{equation}\label{eq:hessian-boundary-estimate}
			\|u\|_{L^\infty(V)}
			\leq\max_{\partial V}|u|
			+C_q\,\diam(V)^{2-2/q}
			\left(\int_V S_q(D^2u)\dx\right)^{1/q}.
		\end{equation}
		Take $V=\{|u|>\eta\}\Subset\Omega$ and let $\eta\downarrow0$ to
		obtain \eqref{eq:target}.
		
		%
		%
	\end{proof}
	
	\begin{remark} The conclusion \eqref{eq:target} fails if we only assume $u\in W^{2,s}_{\loc}$ for every $s<q$.
		For $1<q<2$, set $\beta=2-2/q$ and
		$u(x)=|x|^\beta-1$ on $\D$. This function is continuous on
		$\overline\D$, vanishes on the circle, and belongs to
		$W^{2,s}_{\loc}$ for every $1\leq s<q$, but not to $W^{2,q}_{\loc}$.
		For $x\ne0$, its Hessian has eigenvalues
		\[
		M=\beta|x|^{\beta-2},\qquad m=(\beta-1)M,
		\]
		so $D^2u\in\partial\Gamma_q$ and $S_q(D^2u)=0$ a.e. Nevertheless,
		$\|u\|_\infty=1$. Hence \eqref{eq:target}, with only the a.e. density
		on its right-hand side, is false under any of these lower Sobolev
		assumptions. The same failure of the a.e.-density formulation occurs
		at $q=2$: the function $|x|-1$ belongs to $W^{2,s}_{\loc}$ for every
		$s<2$ and satisfies $\det D^2u=0$ a.e. However, its Monge--Amp\`ere
		measure is $\pi\delta_0$, so the measure estimate
		\eqref{eq:MA-target} retains the missing mass. Section~\ref{sec:measures}
		shows that no analogous weakly continuous $q$-Hessian measure is
		available when $1<q<2$.
	\end{remark}
	
	\section{Proof of Theorem~\ref{thm:local-burkholder}} \label{sec:burkholder}
	
	We first dispose of the case $p=2$. If $A\in\R^{2\times2}$ and
	$\varphi\in W^{1,2}_0(\D;\R^2)$, then
	\begin{equation}\label{eq:moments-disk}
		\int_\D\det(A+D\varphi)\,dx=|\D|\det A.
	\end{equation}
	Indeed, for smooth compactly supported $\varphi$,
	\[
	\begin{split}
		\det(A+D\varphi)
		&=\det A+\sum_{i,j=1}^2(\cof A)_{ij}\partial_j\varphi^i
		+\det D\varphi,\\
		\det D\varphi
		&=\partial_1(\varphi^1\partial_2\varphi^2)
		-\partial_2(\varphi^1\partial_1\varphi^2).
	\end{split}
	\]
	Thus the identity follows by integration. Approximation in $W^{1,2}$
	gives the stated result, since
	\[
	|\det P-\det\widetilde P|
	\leq(|P|+|\widetilde P|)|P-\widetilde P|.
	\]
	As $S_2=\det$, this proves Theorem~\ref{thm:local-burkholder} when
	$p=2$. We henceforth assume $p>2$.
	
	For $t>0$, define
	\begin{equation}\label{eq:Lt-definition}
		L_t(P):=\det P-(M(P)-t)_+(m(P)-t).
	\end{equation}
	The integrand $L_1$ was introduced independently by
	Burkholder and \v{S}ver\'ak \cite{Burkholder,Sverak}, and $L_t(P)$ is just $t^2 L_1(P/t)$.
	See also \cite[\S1]{Guerra} and \cite[\S1]{BMS} for the history and the different normalizations.
	The following inequality is the main property of $L_t$ that we need.
	
	\begin{proposition}[Integral inequality for $L_t$]\label{prop:cutoff}
		For every $t>0$, $A\in\R^{2\times2}$, and
		$f-Ax\in W^{1,2}_0(\D;\R^2)$,
		\begin{equation}\label{eq:cutoff-integral}
			\int_\D L_t(Df)\,dx\geq|\D|L_t(A).
		\end{equation}
	\end{proposition}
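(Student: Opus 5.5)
The plan is to reduce \eqref{eq:cutoff-integral} to a statement about the correction term alone, after normalizing and passing to smooth perturbations.

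\emph{Normalization.} By the homogeneity $L_t(P)=t^2L_1(P/t)$ and the scaling $f\mapsto f/t$, it suffices to take $t=1$.

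\emph{Approximation.} Next we reduce to perturbations $\varphi=f-Ax\in C_c^\infty(\D;\R^2)$. The function $L_1$ is continuous with quadratic growth. It also obeys a local Lipschitz bound $|L_1(P)-L_1(\widetilde P)|\le C(1+|P|+|\widetilde P|)|P-\widetilde P|$, because $M,m$ are Lipschitz in $P$ and $\det$ obeys the bound already recorded. So strong $W^{1,2}$ approximation of $\varphi$ passes the inequality to the limit.

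\emph{Splitting off the null Lagrangian.} For smooth $\varphi$, split $L_1(P)=\det P-g(P)$ with $g(P):=(M(P)-1)_+(m(P)-1)$. By \eqref{eq:moments-disk}, $\int_\D\det Df=|\D|\det A$. Hence \eqref{eq:cutoff-integral} is equivalent to the reverse (quasiconcavity) inequality
\[
\int_\D g(Df)\,dx\le|\D|\,g(A).
\]

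\emph{Complex form of $g$.} It is convenient to use complex notation, with $M=|f_z|+|f_{\bar z}|$ and $m=|f_z|-|f_{\bar z}|$. This is consistent with \eqref{eq:singular-sum}--\eqref{eq:singular-gap}, since $M+m=2|f_z|$ and $M-m=2|f_{\bar z}|$. On $\{M>1\}$ one has
\[
g=(|f_z|-1)^2-|f_{\bar z}|^2,
\]
while $g=0$ on $\{M\le1\}$. Equivalently, $L_1=\det P$ where $M\le1$, and $L_1=(M+m)-1=2|f_z|-1$ where $M>1$. Thus $L_1$ interpolates between the null Lagrangian $\det$ and the convex function $2|f_z|-1$. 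These two pieces are glued along $\{M=1\}$, and the gluing has the correct sign for rank-one convexity: $\det P-(2|f_z|-1)=(M-1)(m-1)$.

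\emph{Main step: the projection lemma.} I expect the main obstacle to be proving the integral inequality itself, since $L_1$ is neither $\det$ nor convex globally. Here I would invoke the projection argument of \cite{AA} (the Lemma~\ref{lem:projection} mentioned in the introduction), in the following form. Write $f_{\bar z}=\mathcal S(f_z)$-type relations for compactly supported perturbations, where $\mathcal S$ is the Beurling transform, an $L^2$ isometry. This gives $\int_\D(|f_z|^2-|f_{\bar z}|^2)=|\D|\det A$. I would then decompose $\D$ into $\{M\le1\}$ and $\{M>1\}$ and use $|f_z|\ge$ its projection onto the direction of $A_z$ to estimate
\[
\int_{\{M>1\}}(2|f_z|-1)\ \ge\ \text{linear terms in }A,
\]
combined with the identity for $\det$ on the complement. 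The real-variable substitute of \cite{AA} for the Beurling-transform step is exactly the projection lemma, and I would quote it.

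\emph{Checking the constant case.} Finally, I would verify that equality holds for $f=Ax$. This confirms that the resulting constant is $|\D|L_1(A)$, which covers both regimes $M(A)\le1$ and $M(A)>1$ by the same computation.
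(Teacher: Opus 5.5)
Your reductions are fine: scaling to $t=1$, approximating by $C_c^\infty$ perturbations, subtracting the null Lagrangian, and writing $L_1$ in complex form. The gap is the main step. It is not actually argued, and the mechanism you sketch does not work. You propose to split $\D$ into $\{M(Df)\le1\}$ and $\{M(Df)>1\}$, and to combine a projection estimate on the second set with ``the identity for $\det$ on the complement.'' No such identity exists. $\int\det Df$ is determined by $A$ only over the whole disk, since it is a boundary integral. Neither $\int_{\{M\le1\}}\det Df$ nor $\int_{\{M>1\}}f_z$ is controlled by $A$. The Beurling-transform isometry only reproduces $\int_\D\det Df=|\D|\det A$, which you already have.

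What does work, in one regime, is to use your own gluing identity globally. Since $\det P-(2|f_z|-1)=(M-1)(m-1)\ge0$ on $\{M\le1\}$, you have $L_1(P)\ge|\mathbf a(P)+\mathbf b(P)|-1$ everywhere. Jensen then gives $\int_\D L_1(Df)\ge|\D|(M(A)+m(A)-1)$ with no decomposition of the domain. This equals $|\D|L_1(A)$ exactly when $M(A)>1$; the paper handles the case $t<M(A)$ this way.

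The real content is the other regime, $M(A)\le t$. There $L_t(A)=\det A$, which exceeds the convex bound $t(M(A)+m(A))-t^2$ by $(M(A)-t)(m(A)-t)$. That excess is strictly positive when $M(A)<t$. Your proposal never separates cases according to $M(A)$, and never says how the projection lemma enters.

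The missing idea is the selection inequality \eqref{eq:selection}: $L_t(P)\ge z\cdot\mathbf b(P)+z\cdot(\mathbf a(P)-z)$ whenever $|z|=t$, or $z=\mathbf a(P)$ with $|\mathbf a(P)|\le t$. The argument then runs as follows.
\begin{itemize}
\item Apply Lemma~\ref{lem:projection} only to the first component $u=\mathbf a(A)\cdot x+\varphi^1$. This is admissible because $|\mathbf a(A)|\le M(A)\le t$.
\item Take $z=\nabla w$ pointwise. This gives $L_t(Df)\ge\det D(w,v)+\nabla w\cdot(\nabla u-\nabla w)$.
\item The first term integrates to $|Q|\det A$, because $(w,v)$ is again a periodic perturbation of $Ax$.
\item The second term has nonnegative integral by \eqref{eq:projected-energy}.
\end{itemize}

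The projection lemma is proved on the periodic square, since its energy inequality uses translated competitors. So you must also extend $C_c^\infty(\D)$ perturbations by zero to $Q$ and cancel the exterior contribution $|Q\setminus\D|\,L_t(A)$.

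Without these steps the proposal only reformulates the statement as the quasiconcavity inequality $\int_\D g(Df)\le|\D|g(A)$. Checking equality for $f=Ax$ does not supply that inequality.
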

	Under the hypotheses of Proposition~\ref{prop:cutoff}, it follows that
	\begin{equation}\label{eq:remainder-integral}
		\int_\D(M(Df)-t)_+(m(Df)-t)\,dx
		\leq|\D|(M(A)-t)_+(m(A)-t).
	\end{equation}
	Indeed, \eqref{eq:remainder-integral} follows immediately by subtracting
	\eqref{eq:cutoff-integral} from \eqref{eq:moments-disk} and using
	\[
	\det P-L_t(P)=(M(P)-t)_+(m(P)-t).
	\]
	
	\begin{proof}[Proof of Theorem~\ref{thm:local-burkholder}]
		For every finite $a$, Fubini's theorem applies because
		\[
		\int_\D\int_0^a t^{p-3}
		|(M(Df)-t)_+(m(Df)-t)|\,dt\,dx
		\leq\frac{2a^{p-2}}{p-2}\int_\D M(Df)^2\,dx<\infty.
		\]
		Multiplying \eqref{eq:remainder-integral} by
		$\frac12p(p-1)(p-2)t^{p-3}$ and integrating over $(0,a)$ gives
		\[
		\int_\D H_{p,a}(Df)\,dx\leq|\D|H_{p,a}(A).
		\]
		By \eqref{eq:positive-truncations}, the hypothesis $S_p(Df)\geq0$
		a.e. makes the left-hand integrands nonnegative.
		Equation~\eqref{eq:truncation-equals-Sp}
		gives $H_{p,a}(Df(x))\to S_p(Df(x))$ a.e., while
		$H_{p,a}(A)=S_p(A)$ for $a\geq M(A)$. Fatou's lemma therefore yields
		\[
		0\leq\int_\D S_p(Df)\,dx
		\leq\liminf_{a\to\infty}\int_\D H_{p,a}(Df)\,dx
		\leq|\D|S_p(A).
		\]
		This proves both the asserted integrability and the inequality.
	\end{proof}
	
	It remains to prove Proposition~\ref{prop:cutoff}. We first record the
	algebraic and projection facts needed for the periodic inequality on
	$Q=(-1,1)^2$. After proving that inequality, we transfer it to the disk
	by zero extension.
	
	\subsection{Pointwise inequalities for \texorpdfstring{$L_t$}{L-t}}
	\label{sec:algebra}
	
	For a matrix $P$, set
	\[
	P=\begin{pmatrix}P_{11}&P_{12}\\P_{21}&P_{22}\end{pmatrix},
	\qquad \mathbf a(P)=(P_{11},P_{12}),\qquad \mathbf b(P)=(P_{22},-P_{21}).
	\]
	\begin{lemma}[Pointwise bounds for $L_t$]\label{lem:pointwise-Lt}
		For every $P\in\R^{2\times2}$,
		\begin{equation}\label{eq:pointwise-cutoff}
			t|\mathbf a(P)+\mathbf b(P)|-t^2\leq L_t(P).
		\end{equation}
		Moreover, if $z\in\R^2$ satisfies either $|z|=t$, or if
		$z=\mathbf a(P)$ and $|\mathbf a(P)|\leq t$, then
		\begin{equation}\label{eq:selection}
			L_t(P)\geq z\cdot\mathbf b(P)+z\cdot(\mathbf a(P)-z).
		\end{equation}
	\end{lemma}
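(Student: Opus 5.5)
We will use the vector form of the signed singular values, which turns both inequalities into elementary two-variable statements. Write $\mathbf a=\mathbf a(P)$ and $\mathbf b=\mathbf b(P)$. Since $L_t(P)=t^2L_1(P/t)$ and both sides of \eqref{eq:pointwise-cutoff} and \eqref{eq:selection} scale the same way (with $z\mapsto z/t$), we reduce to $t=1$.

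\emph{Step 1: algebraic identities.} We first record
\[
\det P=\mathbf a\cdot\mathbf b,\qquad |\mathbf a+\mathbf b|=M+m,\qquad |\mathbf a-\mathbf b|=M-m.
\]
The first is immediate. For the second, note that $\mathbf a+\mathbf b=(P_{11}+P_{22},\,P_{12}-P_{21})$. Hence
\[
|\mathbf a+\mathbf b|^2=\tr(P)^2+4\omega^2,\qquad \omega=\tfrac12(P_{21}-P_{12}).
\]
By the computation of Section~\ref{sec:Sq} this equals $(M+m)^2$, and $M+m\geq0$. The third identity is analogous, using $\mathbf a-\mathbf b=(P_{11}-P_{22},\,P_{12}+P_{21})$ and $(M-m)^2=\gap(S)^2$. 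We will in fact only need the first two.

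\emph{Step 2: proof of \eqref{eq:pointwise-cutoff}.} With $t=1$ the claim reads $L_1(P)\geq M+m-1$. Split into two cases according to the size of $M$.

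If $M>1$, then
\[
L_1(P)=Mm-(M-1)(m-1)=M+m-1,
\]
so equality holds.

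If $M\leq1$, then $L_1(P)=Mm$. The claim becomes $(M-1)(m-1)\geq0$, which holds because $m\leq M\leq1$.

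\emph{Step 3: proof of \eqref{eq:selection}.} The right-hand side equals $z\cdot(\mathbf a+\mathbf b)-|z|^2$. Again there are two cases.

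If $|z|=1$, Cauchy--Schwarz and Step~1 bound the right-hand side by $|\mathbf a+\mathbf b|-1=M+m-1$. This is at most $L_1(P)$ by Step~2.

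If $z=\mathbf a$ with $|\mathbf a|\leq1$, the right-hand side is $\mathbf a\cdot\mathbf b=\det P$. It therefore suffices to show $(M-1)_+(m-1)\leq0$, that is, $m\leq1$ whenever $M>1$. Since $\mathbf a$ is the first row of $P$, we have $|\mathbf a|=|P^{\mathsf T}e_1|$. This is at least the smallest singular value $|m|$ of $P^{\mathsf T}$. Hence $m\leq|m|\leq|\mathbf a|\leq1$.

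The only point needing care is the identification $|\mathbf a+\mathbf b|=M+m$, which rests on the earlier signed singular value identity \eqref{eq:singular-sum}. The remaining steps are one-line case checks, so we do not expect a genuine obstacle.
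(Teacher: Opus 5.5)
Your proof is correct and follows essentially the same route as the paper: the two-branch form of $L_t$, the identity $|\mathbf a(P)+\mathbf b(P)|=M(P)+m(P)$, Cauchy--Schwarz in the case $|z|=t$, and the bound $|m(P)|\leq|\mathbf a(P)|$ to force $m\leq t$ when $z=\mathbf a(P)$. The differences are cosmetic: you reduce to $t=1$ by scaling, and you derive $|\mathbf a+\mathbf b|=M+m$ from the decomposition in Section~\ref{sec:Sq}, whereas the paper uses $|P|^2+2\det P$ directly.
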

	
	The second assertion is the rescaled, real-matrix form of
	\cite[Lemma~3]{AA}. We include a direct proof of both assertions.
	
	\begin{proof}
		Using $\det P=M(P)m(P)$, we can write \eqref{eq:Lt-definition} as
		\begin{equation}\label{eq:Lt-pieces}
			L_t(P)=
			\begin{cases}
				\det P,&M(P)\leq t,\\[1mm]
				t\bigl(M(P)+m(P)\bigr)-t^2,&M(P)>t.
			\end{cases}
		\end{equation}
		Indeed, when $M(P)>t$,
		\[
		\det P-\bigl(t(M(P)+m(P))-t^2\bigr)
		=(M(P)-t)(m(P)-t),
		\]
		and the two branches agree when $M(P)=t$. Equation
		\eqref{eq:Lt-pieces} will be used in both estimates.
		
		Next,
		\begin{equation}\label{eq:singular-sum-real}
			M(P)+m(P)=|\mathbf a(P)+\mathbf b(P)|.
		\end{equation}
		Indeed,
		\[
		|\mathbf a(P)+\mathbf b(P)|^2
		=|P|^2+2\det P=(M(P)+m(P))^2,
		\]
		and both sides before squaring are nonnegative. Equations
		\eqref{eq:Lt-pieces} and \eqref{eq:singular-sum-real} immediately give
		\eqref{eq:pointwise-cutoff}.
		
		We also have
		\begin{equation}\label{eq:real-identities}
			\det P=\mathbf a(P)\cdot \mathbf b(P),\qquad
			|m(P)|\leq |\mathbf a(P)|\leq M(P).
		\end{equation}
		The determinant identity is immediate, and the bounds follow by applying
		the smallest and largest singular-value bounds to $P^{\mathsf T}e_1$.
		
		It remains to prove \eqref{eq:selection}. If $|z|=t$, then
		\[
		z\cdot(\mathbf a(P)+\mathbf b(P))-t^2
		\leq t|\mathbf a(P)+\mathbf b(P)|-t^2\leq L_t(P).
		\]
		If $z=\mathbf a(P)$, the right-hand side of \eqref{eq:selection} is
		\[
		\mathbf a(P)\cdot\mathbf b(P)
		+\mathbf a(P)\cdot\bigl(\mathbf a(P)-\mathbf a(P)\bigr)
		=\det P.
		\]
		Write $M=M(P)$ and $m=m(P)$. If $M\leq t$, then $\det P=L_t(P)$.
		If $M>t$, then \eqref{eq:real-identities} and $|\mathbf a(P)|\leq t$
		give $m\leq t$, so
		\[
		\det P-L_t(P)=(M-t)(m-t)\leq0.
		\]
		Thus $\det P\leq L_t(P)$ in both cases.
	\end{proof}
	\subsection{The affine gradient-constrained projection}
	\label{sec:projection}
	
	Identify opposite sides of $Q=(-1,1)^2$. The space
	$W^{1,r}_{\per}(Q)$ consists of Sobolev functions that are $2$-periodic
	in each coordinate.
	
	\begin{lemma}[Projection with a gradient constraint]
		\label{lem:projection}
		Let $a_0\in\R^2$, let $t\geq|a_0|$, and let
		$\eta\in W^{1,\infty}_{\per}(Q)$. Set
		$u=a_0\cdot x+\eta$. There is
		$\zeta\in W^{1,\infty}_{\per}(Q)$ such that
		$w=a_0\cdot x+\zeta$ satisfies
		\begin{align}
			|\nabla w|&\leq t\quad\text{a.e.},\label{eq:projected-bound}\\
			\nabla w=&\nabla u\quad\text{or}\quad |\nabla w|=t
			\quad\text{a.e.},\label{eq:projected-alternative}\\
			\int_Q \nabla w & \cdot(\nabla u-\nabla w)\dx \geq0.
			\label{eq:projected-energy}
		\end{align}
	\end{lemma}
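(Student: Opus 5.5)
We obtain $w$ as the minimizer of a convex variational problem, namely the $L^2$ projection of $\nabla u$ onto a closed convex set of gradients. Let
\[
\mathcal K=\{\zeta\in W^{1,\infty}_{\per}(Q):\ |a_0+\nabla\zeta|\leq t\ \text{a.e.}\},
\]
which is nonempty ($\zeta=0$ works because $t\geq|a_0|$), convex, and weak-$*$ closed in $W^{1,\infty}_{\per}$. We minimize
\[
I(\zeta)=\int_Q|a_0+\nabla\zeta-\nabla u|^2\,dx=\int_Q|\nabla\zeta-\nabla\eta|^2\,dx
\]
over $\mathcal K$, normalized by $\int_Q\zeta=0$. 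Since $I$ depends only on $\nabla\zeta$, it is convex and weakly lower semicontinuous in $W^{1,2}$. The constraint makes $\mathcal K$ bounded in $W^{1,\infty}$ by Poincar\'e, so the direct method gives a minimizer $\zeta$. Strict convexity of $I$ in $\nabla\zeta$ makes $\nabla w$ unique.

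\emph{Energy inequality \eqref{eq:projected-energy}.} Because $0\in\mathcal K$, we may compare $\zeta$ with $(1-s)\zeta$ for $s\in[0,1]$; this competitor lies in $\mathcal K$ since $a_0+(1-s)\nabla\zeta=(1-s)(a_0+\nabla\zeta)+sa_0$ and $|a_0|\leq t$. The one-sided derivative of $I((1-s)\zeta)$ at $s=0^+$ is nonnegative, which gives $\int_Q\nabla\zeta\cdot(\nabla u-\nabla w)\,dx\geq0$. In addition,
\[
\int_Q a_0\cdot(\nabla u-\nabla w)\,dx=a_0\cdot\int_Q(\nabla\eta-\nabla\zeta)\,dx=0
\]
by periodicity. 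Summing the two yields \eqref{eq:projected-energy}.

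\emph{The alternative \eqref{eq:projected-alternative}.} This is the main obstacle. The variational inequality for the projection,
\[
\int_Q(\nabla w-\nabla u)\cdot(\nabla v-\nabla w)\geq0\quad\text{for all admissible }v,
\]
says nothing pointwise, because admissible perturbations must remain gradients. Following \cite{AA}, I would first treat $\eta$ smooth, replacing $\eta$ by $\eta_\varepsilon$ and passing to the limit at the end. Then I would show that on the open set where $|\nabla w|<t$ (after establishing enough regularity for $w$, e.g.\ $C^{1}$ through the theory of gradient-constrained obstacle-type problems, or by working with a penalized problem $\int|\nabla\zeta-\nabla\eta|^2+\Phi_\delta(|a_0+\nabla\zeta|)$ with $\Phi_\delta$ convex, vanishing on $[0,t]$, and blowing up beyond $t$), every local perturbation $v=w+\varepsilon\phi$ with $\phi\in C_c^\infty$ is admissible for small $|\varepsilon|$ of either sign. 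This gives the Euler--Lagrange equation $\Delta(w-u)=0$ there in the weak sense.

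Interior harmonicity alone does not give $\nabla w=\nabla u$, so a second argument is needed. For the penalized problem, the Euler--Lagrange equation reads
\[
\operatorname{div}\bigl(\nabla\zeta-\nabla\eta+\Phi_\delta'(|\nabla w|)\,\nabla w/|\nabla w|\bigr)=0.
\]
I would look instead for the structure used in \cite{AA}: there $w$ is obtained differently, as a supremum/infimum-type or distance-like construction (in the scalar periodic setting the projection can be realized via a Lipschitz truncation of the form $w=\max/\min$ of $u$ with $t$-Lipschitz envelopes, e.g.\ $w$ built from inf-convolutions of $u-a_0\cdot x$ shifted appropriately). For such envelopes, \eqref{eq:projected-alternative} holds automatically: at a.e.\ point either the envelope touches $u$, so the gradients agree a.e.\ on the contact set, or the envelope is a cone of slope $t$. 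If the minimization route stalls, I would switch to that explicit envelope construction. In that case \eqref{eq:projected-bound} and \eqref{eq:projected-alternative} come for free, and \eqref{eq:projected-energy} would need to be checked using that $\nabla w\cdot(\nabla u-\nabla w)$ vanishes on the contact set, together with an integration by parts and a sign condition on $\Delta w$ on the complementary set.
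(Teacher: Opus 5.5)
\textbf{The gap is in \eqref{eq:projected-alternative}.} For the minimizer of $I(\zeta)=\int_Q|\nabla\zeta-\nabla\eta|^2\dx$ the alternative is false in general. So neither a $C^1$ theory nor your penalization $\Phi_\delta$ (whose minimizers converge to this same constrained projection) can rescue the step. Here is an example.
\begin{itemize}
\item[] Take $a_0=0$ and $\eta=\eta(x_1)$ with $\eta'=3t$ on $(-1,-\frac12)$ and $\eta'=-t$ on $(-\frac12,1)$. This $\eta$ is periodic and Lipschitz.
\item[] Averaging a competitor in $x_2$ preserves the constraint and does not increase $I$ (Jensen). By uniqueness, $\nabla w=(g(x_1),0)$, where $g$ is the $L^2(-1,1)$ projection of $\eta'$ onto $\{|g|\leq t,\ \int_{-1}^1 g=0\}$.
\item[] That projection is $g=\max\{-t,\min\{t,\eta'-\lambda\}\}$, with multiplier $\lambda=-2t/3$ fixed by the zero-mean condition.
\item[] Hence $\partial_1 w=-t/3$ on $(-\frac12,1)$. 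This is neither $\partial_1u=-t$ nor of modulus $t$.
\end{itemize}
The multiplier is the one-dimensional form of the harmonic corrector $w-u$ you found on the inactive set. Your proof of \eqref{eq:projected-energy} is correct, but it is for the wrong $w$.

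\textbf{Your fallback does not close the gap either.} The natural envelope $w(x)=\inf_y\bigl(u(y)+t|x-y|\bigr)$ does satisfy \eqref{eq:projected-bound} and \eqref{eq:projected-alternative}. Off the contact set, however, it is built from cones $x\mapsto u(y)+t|x-y|$, which are subharmonic in the plane. The sign $\Delta w\leq0$ on $\{w<u\}$ that your integration by parts needs can therefore fail. Take $a_0=0$, $s>t>H>0$, and $\eta=\min\{0,s|x|-H\}$ on $Q$, extended periodically. Then $w=\min\{0,t|x|-H\}$ and $\int_Q\nabla w\cdot(\nabla u-\nabla w)\dx=\pi H^2(t/s-1)<0$.

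\textbf{The missing idea is to project function values rather than gradients.} This is the variational argument the paper cites from \cite{AA}. Let $\zeta$ minimize $\int_Q(\zeta-\eta)^2\dx$ over your set $\mathcal K$; existence and uniqueness go as you describe.

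For \eqref{eq:projected-alternative}:
\begin{itemize}
\item[] On $\{\zeta=\eta\}$ the gradients agree a.e.
\item[] Let $x_0\in\{\zeta>\eta\}$ be a differentiability point with $|\nabla w(x_0)|<t$. Set $\kappa=t-|\nabla w(x_0)|$ and $\ell(x)=w(x_0)-r\kappa+\nabla w(x_0)\cdot(x-x_0)+\kappa|x-x_0|$.
\item[] For small $r$, $\ell>w$ on $\{\frac32r\leq|x-x_0|\leq2r\}$. Replacing $w$ by $\min\{w,\ell\}$ in $B_{2r}(x_0)$ then gives an admissible competitor. It lies between $u$ and $w$ there and is strictly below $w$ near $x_0$, which contradicts minimality.
\item[] Use $\max$ in the same way on $\{\zeta<\eta\}$.
\end{itemize}

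For \eqref{eq:projected-energy}, test the variational inequality $\int_Q(\zeta-\eta)(\xi-\zeta)\dx\geq0$ with the translates $\xi=\zeta(\cdot\pm se_i)\in\mathcal K$. Add the two inequalities and change variables on the periodic square to get
\[
-\int_Q\bigl[(\zeta-\eta)(x+se_i)-(\zeta-\eta)(x)\bigr]\bigl[\zeta(x+se_i)-\zeta(x)\bigr]\dx\geq0.
\]
Divide by $s^2$ and let $s\to0$ to obtain $\int_Q\nabla\zeta\cdot\nabla(\eta-\zeta)\dx\geq0$. Adding your $a_0$-term yields \eqref{eq:projected-energy}. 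This translation step is exactly where periodicity is used.
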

	
	We refer to \cite[Lemma~4]{AA} for the one-page proof. In its notation, take
	$K=\{z\in\R^2:|z|\leq t\}$ and let the vector denoted there by $p$
	equal $a_0$. The periodic square is used
	in the proof of \eqref{eq:projected-energy}: in
	\cite[p.~4, proof of (6)]{AA}, the line
	``$\zeta(\,\cdot\pm se_i)\in\mathcal C$'' supplies translated admissible
	competitors, after which the authors change variables on $Q$. This
	translation step is the only place where we use $Q$ as a periodic
	square.

	\subsection{The inequality for \texorpdfstring{$L_t$}{L-t} on the disk}
	\label{sec:cutoff-proof}
	
	The following argument proves Proposition~\ref{prop:cutoff}; its smooth
	periodic part is \cite[Theorem~1]{AA}, rescaled from $L_1$ to $L_t$.
	
	\begin{proof}[Proof of Proposition~\ref{prop:cutoff}]
		\emph{The periodic inequality.}
		For $A\in\R^{2\times2}$ and
		$\psi\in W^{1,2}_{\per}(Q;\R^2)$, periodicity and the determinant
		calculation used to prove \eqref{eq:moments-disk} give
		\begin{equation}\label{eq:moments}
			\int_Q(A+D\psi)\,dx=|Q|A,
			\qquad
			\int_Q\det(A+D\psi)\,dx=|Q|\det A.
		\end{equation}
		First let $f=Ax+\varphi$ with $\varphi$ smooth and periodic on $Q$,
		and write $f=(u,v)$. If $t<M(A)$, \eqref{eq:pointwise-cutoff}, Jensen's
		inequality for the Euclidean norm, and \eqref{eq:moments} give
		\begin{align*}
			\int_Q L_t(Df)\,dx
			&\geq\int_Q\bigl(t|\mathbf a(Df)+\mathbf b(Df)|-t^2\bigr)\,dx\\
			&\geq t\left|\int_Q\bigl(\mathbf a(Df)+\mathbf b(Df)\bigr)\,dx\right|
			-t^2|Q|\\
			&= |Q|\bigl(t(M(A)+m(A))-t^2\bigr)=|Q|L_t(A).
		\end{align*}
		Here $\int_QDf\,dx=|Q|A$ and \eqref{eq:singular-sum-real} give
		\[
		\left|\int_Q\bigl(\mathbf a(Df)+\mathbf b(Df)\bigr)\,dx\right|
		=|Q|\,|\mathbf a(A)+\mathbf b(A)|=|Q|(M(A)+m(A)).
		\]
		Suppose $t\geq M(A)$. Then
		$u=\mathbf a(A)\cdot x+\varphi^1$, and
		$|\mathbf a(A)|\leq M(A)\leq t$. Lemma~\ref{lem:projection} gives
		$w=\mathbf a(A)\cdot x+\zeta$ for which
		$z=\nabla w$ either has $|z|=t$, or equals $\nabla u$ with
		$|\nabla u|\leq t$. By \eqref{eq:selection},
		\[
		L_t(Df)\geq
		\nabla w\cdot(\partial_2v,-\partial_1v)
		+\nabla w\cdot(\nabla u-\nabla w).
		\]
		The first term is $\det D(w,v)$. Since $(w,v)$ is a periodic
		$W^{1,2}$ perturbation of the affine map $Ax$, its integral
		is $|Q|\det A$ by
		\eqref{eq:moments}; the second term has nonnegative integral by
		\eqref{eq:projected-energy}. Thus
		\[
		\int_Q L_t(Df)\,dx\geq |Q|\det A=|Q|L_t(A).
		\]
		
		\medskip
		\emph{Restriction to the disk.}
		We first note that
		\begin{equation}\label{eq:L-Lipschitz}
			|L_t(P)-L_t(\widetilde P)|\leq\sqrt2\,t\,|P-\widetilde P|
			\qquad(P,\widetilde P\in\R^{2\times2}).
		\end{equation}
		Indeed, on the convex set $\{M\leq t\}$, the derivative of the determinant
		has Frobenius norm $|\cof P|=|P|\leq\sqrt2\,t$. The function
		\[
		P\longmapsto t|\mathbf a(P)+\mathbf b(P)|-t^2
		\]
		is globally $\sqrt2\,t$-Lipschitz, since
		$P\mapsto\mathbf a(P)+\mathbf b(P)$ has norm $\sqrt2$. Split a
		matrix segment at the points where it enters and leaves $\{M\leq t\}$;
		the branches agree there, and their Lipschitz estimates give
		\eqref{eq:L-Lipschitz}.
		
		For $\varphi\in C_c^\infty(\D;\R^2)$, extend $\varphi$ by zero to
		$Q$ and cancel the constant exterior contribution $L_t(A)$. This
		gives \eqref{eq:cutoff-integral} for $f=Ax+\varphi$. For a general
		zero-trace perturbation, choose $\varphi_j\in C_c^\infty(\D;\R^2)$
		with $\varphi_j\to f-Ax$ in $W^{1,2}_0(\D;\R^2)$ and use
		\eqref{eq:L-Lipschitz}:
		\[
		\|L_t(A+D\varphi_j)-L_t(Df)\|_{L^1(\D)}
		\leq\sqrt2\,t\,\|D\varphi_j-D(f-Ax)\|_{L^1(\D)}\longrightarrow0.
		\]
	\end{proof}
	
	\section{Distributional admissibility and Hessian measures}
	\label{sec:measures}
	
	The dual-cone definition of $k$-convexity in
	\cite[Lemma~2.2]{TW2} and \cite[Definition~8.1]{WangHessian}
	adapts to our cones. For $1<q\leq2$, call $u\in C(\Omega)$
	distributionally $\Gamma_q$-admissible if
	\begin{equation}\label{eq:distributional-admissibility}
		\int_\Omega u\,\tr(CD^2\varphi)\dx\geq0
		\quad\text{for all constant }C\in\Gamma_q^*
		\text{ and }0\leq\varphi\in C_c^\infty(\Omega).
	\end{equation} 
	For smooth $u$,
	this is equivalent to $D^2u\in\Gamma_q$. The same equivalence holds
	a.e. for $u\in W^{2,q}_{\loc}$, by integration by parts and duality
	of the closed convex cones. Nonnegative convolution preserves
	\eqref{eq:distributional-admissibility}, so it gives smooth
	closed-cone approximations on compact subdomains.
	
	When $q$ is not an integer, admissibility alone does not supply a Hessian measure. For integer
	$k$, Trudinger--Wang construct a measure agreeing with the $k$-Hessian
	on smooth functions and weakly continuous under local convergence in
	measure \cite[Theorem~1.1]{TW2}; see also
	\cite[Theorem~8.1]{WangHessian}. Their argument uses the integral
	structure of the elementary symmetric polynomials. For the present
	noninteger $q$, even locally uniform convergence is insufficient.
	
	\begin{proposition}[Obstruction to a weakly continuous $q$-Hessian measure]
		\label{prop:no-measure}
		Let $1<q<2$. There is no assignment of nonnegative Radon measures
		$u\mapsto\mu_q[u]$ to continuous distributionally
		$\Gamma_q$-admissible functions having both of the following properties:
		\[
		\mu_q[u]=S_q(D^2u)\,dx\quad\text{for smooth }u,
		\qquad
		u_j\to u\text{ locally uniformly }\Longrightarrow
		\mu_q[u_j]\rightharpoonup\mu_q[u].
		\]
	\end{proposition}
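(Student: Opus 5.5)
The plan is to argue by contradiction: I will build two sequences of smooth $\Gamma_q$-admissible functions that converge locally uniformly to the \emph{same} limit, but whose $S_q$-masses concentrate at the origin with \emph{different} weights. The obstruction comes entirely from the lack of divergence structure of $S_q$ for noninteger $q$. The natural limit is the homogeneous function $v(x)=|x|^\beta$ with $\beta=2-2/q$. By the Remark following the proof of Theorem~\ref{thm:Sobolev}, $v$ is smooth away from $0$, satisfies $D^2v\in\partial\Gamma_q$, and has $S_q(D^2v)=0$ there. Such a $v$ is distributionally $\Gamma_q$-admissible, since it is a locally uniform limit of the smooth admissible approximations constructed below and nonnegative convolution preserves \eqref{eq:distributional-admissibility}. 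Suppose an assignment $\mu_q$ with both properties existed. Because any admissible $u$ that is smooth on an open set $U$ has $\mu_q[u]=S_q(D^2u)\,dx$ on $U$ (apply the first property to smooth approximations which agree with $u$ on $U$, together with weak continuity), we get $\mu_q[v]=c\,\delta_0$ for some constant $c\ge0$.

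\emph{Step 1 (scaled profiles).} Take a radial profile $\psi$ with $\psi(x)=|x|^\beta$ for $|x|\ge1$, smooth across the origin, and $D^2\psi\in\Gamma_q$ everywhere. Set $\psi_\varepsilon(x)=\varepsilon^\beta\psi(x/\varepsilon)$. Then $\psi_\varepsilon=v$ outside $B_\varepsilon$, and $\psi_\varepsilon\to v$ uniformly. Since $S_q$ is $q$-homogeneous,
\[
S_q(D^2\psi_\varepsilon)(x)=\varepsilon^{(\beta-2)q}S_q(D^2\psi)(x/\varepsilon),
\]
and therefore
\[
\int S_q(D^2\psi_\varepsilon)\dx=\varepsilon^{(\beta-2)q+2}\int_{B_1}S_q(D^2\psi)\dx=\int_{B_1}S_q(D^2\psi)\dx=:I(\psi),
\]
because $(\beta-2)q+2=0$. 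All of this mass lies in $B_\varepsilon$. Testing weak convergence against a cutoff equal to $1$ near $0$ then forces $c=I(\psi)$. This must hold for every admissible profile $\psi$ of this kind, so it suffices to exhibit two such profiles with $I(\psi_1)\neq I(\psi_2)$.

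\emph{Step 2 ($I$ is not constant).} This is the main obstacle. For $q=2$ the analogous quantity is a boundary flux and is independent of the profile; here it should not be. I will start from one profile $\psi$ that lies in $\operatorname{int}\Gamma_q$ on a compact annulus $K\subset B_1$, for instance one that is strictly convex with $\psi''$ close to $\psi'/r$ there. I then perturb it to $\psi+s\delta$ with $\delta$ radial and supported in $K$; admissibility is preserved for small $|s|$ because the interior of the cone is open. The first variation is
\[
\frac{d}{ds}I(\psi+s\delta)\Big|_{s=0}=2\pi\int \bigl(\partial_M S_q\,\delta''+\partial_m S_q\,\delta'/r\bigr)r\,dr .
\]
I will use the derivatives computed in Section~\ref{sec:elliptic}, with the ordering of $M$ and $m$ fixed on $K$. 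Integrating by parts, this vanishes for all $\delta$ only if
\[
(r\,\partial_M S_q)''-(\partial_m S_q)'=0
\]
on $K$. That is a nontrivial third-order ODE constraint on $\psi$, which a generic choice violates. The first thing I would try is an explicit test: take $\psi$ locally of the form $\alpha+\gamma r^2+\kappa r^3$ on $K$, with the parameters chosen so that the residual of the ODE is visibly nonzero, and check this numerically or symbolically. Alternatively, one can simply compute $I$ in closed form for a two-parameter family, such as a quadratic cap glued to $r^\beta$, after smoothing.

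\emph{Step 3 (conclusion).} With two profiles $\psi_1,\psi_2$ satisfying $I(\psi_1)\ne I(\psi_2)$, the sequences $(\psi_1)_\varepsilon$ and $(\psi_2)_\varepsilon$ both converge locally uniformly to $v$. Step 1 then forces $c=I(\psi_1)$ and $c=I(\psi_2)$, which is a contradiction. To stay strictly within bounded domains and continuous data, the same argument can be localized to $\D$ by using $v-1$ in place of $v$, since constants do not affect Hessians.
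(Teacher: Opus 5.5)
Your Steps 1 and 3 are sound, and they give a route genuinely different from the paper's. Since $(\beta-2)q+2=0$, the mass $\int S_q(D^2\psi_\varepsilon)\,dx=I(\psi)$ does not depend on $\varepsilon$. So any such $\mu_q$ would have to satisfy $\mu_q[|x|^\beta]=I(\psi)\delta_0$ for every admissible profile $\psi$. (Admissibility of $v$ holds because the defining inequality passes to locally uniform limits; convolution plays no role there.)

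The gap is Step 2. It is the only place where $1<q<2$ is used, and you only assert it. Moreover, the heuristic behind it is false for a large class of profiles. Consider the region where $\psi''\le\psi'/r$, so that $\lambda_{\max}=\psi'/r$; this is the regime of $|x|^\beta$ itself. There the relations $1-\frac q2=\frac q2(1-\beta)$ and $(1-\beta)q=2-q$ give the exact identity
\[
r\,S_q(D^2\psi)=\tfrac12\,\tfrac{d}{dr}\bigl(r^{1-\beta}\psi'\bigr)^q ,
\]
so the radial integrand is a null Lagrangian in that regime. Three consequences follow:
\begin{itemize}
\item every profile with $\psi'/r$ nonincreasing has $I(\psi)=\pi\beta^q$;
\item perturbations supported in that regime do not change $I$ at all;
\item your fallback family (a quadratic cap glued to $r^\beta$) returns $\pi\beta^q$ for every member.
\end{itemize}
Your first-variation formula pairs $\partial_M S_q$ with $\delta''$, which silently places you in the other regime $\psi''>\psi'/r$. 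You would need to say so, and you would still have to prove that the Euler--Lagrange expression is not identically zero there.

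The missing idea closes the gap directly. Set $T(a,b)=a^{q-1}\bigl(\frac q2b+(1-\frac q2)a\bigr)$ and $g=r^{1-\beta}\psi'$. Whatever the ordering of the eigenvalues, the identity above reads $r\,T(\psi'/r,\psi'')=\frac12(g^q)'$. Where $\psi''>\psi'/r$, on the other hand, $S_q=T(\psi'',\psi'/r)$. Since $g(0^+)=0$ and $g=\beta$ for $r\ge1$, homogeneity gives
\[
I(\psi)=\pi\beta^q+2\pi\int_{\{\psi''>\psi'/r\}}\Bigl(\frac{\psi'}{r}\Bigr)^q h\Bigl(\frac{r\psi''}{\psi'}\Bigr)\,r\,dr,
\qquad
h(t)=t^{q-1}\Bigl(\frac q2+\Bigl(1-\frac q2\Bigr)t\Bigr)-\Bigl(\frac q2t+1-\frac q2\Bigr).
\]
Here $h>0$ on $(1,\infty)$, because $h(1)=h'(1)=0$ and $h''(t)=\frac{q(q-1)(2-q)}2t^{q-3}(t-1)>0$.

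Now write $\psi'=r\phi$. Admissibility amounts to $\log\phi$ having slope at least $\beta-2$ as a function of $\log r$. So there are admissible profiles with $\psi'/r$ nonincreasing, giving $I=\pi\beta^q$. There are also admissible profiles with $\psi'/r$ increasing on an interval, giving $I>\pi\beta^q$. This is the contradiction you want.

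Note that $h''$ is exactly the paper's $g''$, so both proofs run on the strict convexity of $t\mapsto S_q(\operatorname{diag}(t,1))$. The paper exploits it by oscillation: a laminate converges in $C^1$ to a smooth quadratic, so $\mu_q$ of the limit is pinned by the first property, and no singular limit or profile construction is needed. Your concentration argument, once completed, instead shows something else: the mass that $\mu_q$ would have to put at the pole of $|x|^{2-2/q}$, the example from the introduction, is not determined.
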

	
	\begin{proof}
		Work on any bounded domain $\Omega$ and fix $0<a<1$. Put
		\[
		u(x)=x_1^2+\frac{x_2^2}{2},\qquad
		u_j(x)=u(x)+\frac{a}{2^{2j}}\bigl(2-\cos(2^jx_1)\bigr).
		\]
		Then $u_j\to u$ in $C^1$, and
		\[
		D^2u_j=\operatorname{diag}(2+a\cos(2^jx_1),1)>0;
		\]
		in particular $D^2 u_j \in \Gamma_q$.
		For $t>1$, let
		\[
		g(t)=S_q(\operatorname{diag}(t,1))
		=\frac q2t^{q-1}+\left(1-\frac q2\right)t^q.
		\]
		We have
		\[
		S_q(D^2 u_j) \, dx = g(2+a \cos(2^j x_1)) \, dx \rightharpoonup\bar g\,dx,
		\qquad
		\bar g=\frac1{2\pi}\int_0^{2\pi}g(2+a\cos\theta)\,d\theta;
		\]
		in fact, if $\phi \in C_c(\Omega)$, then we split the $x_1$ axis into intervals of length $2\pi/2^j$, and on each such interval $I = [c,c+2\pi/2^j]$, 
		\[
		\begin{split}
			&\quad \left| \int_{\substack{x \in \R^2\\ x_1 \in I}}  [g(2+a \cos(2^j x_1))-\bar{g}] \phi(x) dx \right| \\
			&= \left| \int_{x_1 \in I}  [g(2+a \cos(2^j x_1))-\bar{g}] \left[\int_{\R} \phi(x_1,x_2) dx_2 -\int_{\R}\phi(c,x_2) dx_2 \right] dx_1 \right|\\
			&\leq 3^q C |I| \omega_{\phi}(2\pi/2^j) 
		\end{split}
		\]
		where $C$ is the diameter of the projection of $\Omega$ on the $x_2$ axis, and $$\omega_{\phi}(\delta) := \sup_{\substack{x,x' \in \R^2 \\ |x-x'| \leq \delta}} |\phi(x)-\phi(x')| $$ is the modulus of continuity of $\phi$. Summing over the finitely many $I$ that intersects the projection of $\Omega$ on the $x_1$ axis, and using that $\omega_{\phi}(\delta) \to 0$ as $\delta \to 0^+$, we obtain the desired weak convergence of $S_q(D^2 u_j)\, dx$ to $\bar{g}\, dx$. 
		
		Now note
		\[
		g''(t)=\frac{q(q-1)(2-q)}2\,t^{q-3}(t-1)>0.
		\]
		As a result, Jensen's inequality shows
		\[
		\bar g=\frac1{2\pi}\int_0^{2\pi}g(2+a\cos\theta)\,d\theta
		>g(2)=S_q(D^2u).
		\]
		This shows $\bar{g} \, dx \ne S_q(D^2 u)\, dx$; in particular, $S_q(D^2 u_j) \, dx$ does not converge weakly to $S_q(D^2 u) \, dx$.
	\end{proof}

\end{document}